\documentclass{article}

\PassOptionsToPackage{round}{natbib}
\usepackage[preprint]{neurips_2026}

\usepackage[utf8]{inputenc} % allow utf-8 input
\usepackage[T1]{fontenc}    % use 8-bit T1 fonts
\usepackage[hyphens]{url}
\usepackage[hidelinks]{hyperref}     % hyperlinks
\usepackage{booktabs}       % professional-quality tables
\usepackage{nicefrac}       % compact symbols for 1/2, etc.
\usepackage{microtype}      % microtypography
\usepackage{xcolor}         % colors
\usepackage{amsmath,amssymb, amsthm, amsfonts}%
\usepackage{multirow}
\usepackage{mathtools}
\usepackage{makecell}
\usepackage{cellspace} 
\usepackage{bbm}
\newcommand{\R}{\mathbb{R}}

\newcommand{\E}{\mathbb{E}}
\newcommand{\1}{\mathbbm{1}}

\theoremstyle{definition}
\newtheorem{definition}{Definition}[section]
\theoremstyle{definition}

\newtheorem{assumptions}{Assumptions}[section]
\theoremstyle{plain}
\newtheorem{theorem}[definition]{Theorem}
\newtheorem{lemma}[definition]{Lemma} 
\newtheorem{proposition}[definition]{Proposition} 
\newtheorem{corollary}[definition]{Corollary}
\theoremstyle{definition}

\newtheorem{remark}[definition]{Remark}
\title{Robust and Fast Training via Per-Sample Clipping}

\author{%
  Davide Nobile \\
  Department of Mathematics\\
  University of Vienna\\
  \texttt{davide.nobile@univie.ac.at} \\
  \And
  Philipp Grohs \\
  Department of Mathematics\\
  University of Vienna\\
  and\\
  RICAM\\
  Austrian Academy of Sciences\\
   \texttt{philipp.grohs@univie.ac.at}
}

\begin{document}

\maketitle

\begin{abstract}
We propose a robust gradient estimator based on per-sample gradient clipping and analyze its properties both theoretically and empirically. We show that the resulting method, \textit{per-sample clipped SGD} (PS-Clip-SGD), achieves optimal in-expectation convergence rates for non-convex optimization problems under heavy-tailed gradient noise. Moreover, we establish high-probability convergence guarantees that match the in-expectation rates up to polylogarithmic factors in the failure probability.
We complement our theoretical results with multiple numerical experiments. In particular, we demonstrate that PS-Clip-SGD outperforms both vanilla SGD with momentum and standard gradient clipping when training AlexNet on the CIFAR-100 dataset, even after accounting for the additional computational time caused by per-sample clipping.
We also empirically show that, in the presence of gradient accumulation, applying clipping at the mini-batch level can improve training performance while incurring virtually no additional computational cost. This finding is particularly interesting, as it contradicts the common practice of applying clipping only after all accumulation steps have been completed.
\end{abstract}

\section{Introduction}

We consider the stochastic optimization problem
\begin{equation}\label{eq: objective function}
    \min_{x \in \mathbb{R}^d} \; F(x), \quad \text{where} \quad F(x) = \mathbb{E}_{\xi \sim \mathcal{D}}[f(x, \xi)] \,,
\end{equation}

where $F : \mathbb{R}^d \to \mathbb{R}$ is an $L$-smooth function and $\xi$ is a random variable drawn from an unknown distribution $\mathcal{D}$. This type of problems naturally appear in machine learning and statistics. By far the most popular and most analyzed method to tackle this type of optimization problems is Stochastic Gradient Descent (SGD). The convergence properties of SGD and its variants in this setting have extensively been studied under different assumptions on the problem ($\ref{eq: objective function}$). Under light tailed gradient noise, SGD has been shown to achieve optimal convergence rates for both convex and non-convex functions \citep{ghadimi_lan_2013_stochastic_nonconvex,arjevani_carmon_duchi_etal_2023_lower_bounds}. However, recent work \citep{pmlr-v97-simsekli19a,pmlr-v238-battash24a, zhang2020adaptive} has shown that in many machine learning applications, gradient noise might not posses bounded variance, making it necessary to analyze the problem under heavy tails conditions. We hereby mention that in the present work, by \textit{heavy-tailed} gradient noise, we mean that $|\nabla f(x,\xi)|$ has finite $p$-the moments for some $p \in (1,2]$, but may not have bounded variance.

While stochastic gradient descent (SGD) is the standard method under light-tailed noise assumptions, it can perform poorly when the gradient noise distribution lacks finite variance. This limitation has motivated the development of alternative methods designed to handle heavy-tailed noise. One widely used approach is \emph{clipped SGD}, which replaces the stochastic gradient estimator ${\nabla} f(x_t,\xi_t)$ with the clipped estimator
\begin{equation*}
    g(x_t,\xi_t) = \min\left\{1, \frac{\gamma_t}{|{\nabla} f(x_t, \xi_t)|} \right\} {\nabla} f(x_t, \xi_t) \,,
\end{equation*}
for a clipping threshold $\gamma_t > 0$. The resulting update rule is $x_{t+1} = x_t - \eta_t g(x_t,\xi_t)$.
Clipped SGD has been extensively studied, and both high-probability and in-expectation convergence guarantees have been established in the non-convex, heavy-tailed setting~\citep{zhang2020adaptive, improved-convergence}. However, these results do not achieve the optimal sample complexity lower bounds established by \citep{zhang2020adaptive}. Additionally, the high probability results require specific choices of $\log(1/\delta)$-dependent step size and clipping threshold to establish convergence with probability $1-\delta$.

An alternative approach is \emph{Normalized SGD}, originally proposed by \citep{nesterov1984minimization}. Recent work by \citep{hubler2024from} shows that Normalized SGD achieves optimal sample complexity in both expectation and high probability for non-convex problems with heavy-tailed noise. Additionally, Normalized SGD does not require any additional parameters to be tuned compared to vanilla SGD. However, the optimal step size used in \citep{hubler2024from} requires knowledge of the initialization gap $\Delta_1$. Furthermore, their analysis provides guarantees only for the sum of gradient norms, rather than the sum of squared gradient norms, which is a strictly stronger performance criterion due to Jensen's inequality.

Per-sample gradient clipping is a commonly used technique to ensure differentially private learning \citep{abadi_chu_goodfellow_etal_2016_dp_sgd}, where it is used to control the influence of any individual sample on the training process. In this context, its mathematical properties were for example analyzed in \citep{zhang_chen_hong_wu_yi_2022_clipping_fl, improv-ana-of-ps-clipping, xia_shen_yao_fu_xu_2023_adaptive_clipping}. Due to the computational cost of per-sample computations, its application and analysis has so far been limited to differential privacy and, in particular, is entirely unrelated to handling heavy-tailed noise. However, we believe that recent developments in the efficient computation of per-sample gradient norms \citep{lee_kifer_2021_fast_clipping_popets, ghost_clipping} make per-sample clipping a viable method for improving training performance in the presence of heavy-tailed noise. To the best of our knowledge, this is the first work to propose per-sample clipping as a method for improving training performance in this setting. 
Concretely, our goal is to develop a robust gradient estimator that provides strong convergence guarantees under heavy-tailed noise.
%The only works we are aware of that analyze PS-clipping do so in the context of differential privacy, for example \citep{zhang_chen_hong_wu_yi_2022_clipping_fl, improv-ana-of-ps-clipping, xia_shen_yao_fu_xu_2023_adaptive_clipping}. However, we emphasize that our work is completely unrelated to differential privacy. Rather, our goal is to develop a robust gradient estimator that provides strong convergence guarantees under heavy-tailed noise.

\subsection{Our contributions}

In this work, we propose and analyze a robust gradient estimator based on per-sample gradient clipping, and show that it achieves optimal convergence rates for non-convex problems with heavy-tailed gradient noise.

Specifically, we first show that our method, which we call \emph{per-sample clipped SGD} (PS-Clip-SGD), achieves optimal in-expectation sample complexity, improving upon existing results for Clip-SGD. Moreover, our result matches the sample complexity of Normalized SGD proved in \citep{hubler2024from}; however, in contrast to the bounds established therein, our guarantees also hold for the stronger convergence measure given by the sum of \textit{squared} gradients.

Further, we show that PS-Clip-SGD also converges with high probability. In this setting, the sample complexity matches that of the in-expectation result up to a multiplicative $\mathrm{polylog}(T/\delta)$ factor.

We evaluate the performance of our method through numerical experiments and compare it to standard baselines. In particular, we demonstrate that, even with untuned hyperparameters, PS-Clip-SGD significantly outperforms both Clip-SGD and Normalized SGD on the task of minimizing a quadratic function with Pareto noise. Additionally, we evaluate PS-Clip-SGD with momentum on an image classification task using AlexNet and CIFAR-100. We show that, in this setting as well, our method outperforms both vanilla SGD with momentum and Clip-SGD with momentum, even when accounting for the additional computational cost induced by per-sample clipping.

Finally, we show that, when gradient accumulation is performed, clipping gradients at each mini-batch can outperform the standard practice of applying clipping only once after all accumulation steps have been completed. We demonstrate this result by training a GPT-2 model with approximately $124$M parameters on the OpenWebText dataset.

\subsection{Related work}

In the presence of light-tailed noise, or when the variance is bounded, the convergence properties of SGD have been extensively studied. Under bounded variance assumptions, convergence in expectation has been established, for example, in \citep{NIPS2011_40008b9a, pmlr-v97-mansel, pmlr-v108-gorbunov20a} for convex functions, and in \citep{ghadimi_lan_2013_stochastic_nonconvex} for the non-convex setting. Under slightly weaker conditions, \citep{khaled2022better} also prove convergence in expectation for non-convex functions. When the noise is assumed to be sub-Gaussian, high-probability convergence results have been obtained, for instance, in \citep{li_orabona_2020_adaptive_sgd_momentum}, while \citep{li_liu_2022_heavy_tails_sgd, madden2024subweibull} further relax this assumption to sub-Weibull noise. Note, however, that these assumptions are still significantly stronger than merely requiring bounded $p$-th moments.

When the gradient noise is heavy-tailed, i.e. when it has a finite $p$-th moment for some $p \in (1,2)$ but not necessarily finite variance, additional regularization techniques seem to become necessary, both in theory and practice. One of the most widely used approaches for improving robustness under heavy-tailed noise is gradient clipping.

Clip-SGD is a commonly used method to stabilize training and mitigate the exploding gradient problem in deep neural networks. Its convergence properties, both in expectation and with high probability, have been studied in various settings. \citep{zhang2020adaptive} establish convergence in expectation for Clip-SGD in non-convex problems with heavy-tailed gradient noise. \citep{cutkosky_mehta_2021_heavy_tails} prove high-probability convergence for a variant that combines clipping with normalization and momentum, achieving rates that match those of \citep{zhang2020adaptive} up to a $\log(T/\delta)$ factor. Subsequently, \citep{nguyen_ene_nguyen_2023_clipped_sgd} establish high-probability convergence of Clip-SGD under slightly weaker assumptions, without requiring momentum or normalization, for both convex and non-convex problems in the heavy-tailed regime. These results were later improved by \citep{improved-convergence}, who removed the additional $\log(T)$ factor in the high-probability upper bound. However, their approach requires a specific choice of $\log(1/\delta)$ and $\Delta_1$-dependent step sizes and clipping thresholds to achieve these convergence rates. Moreover, their guarantees remain suboptimal compared to the lower bound in \citep{zhang2020adaptive}.

\section{Preliminaries}

\paragraph{Notation.}
We use $\mathbb{N}$ and $\mathbb{R}$ to denote the sets of natural and real numbers, respectively. The objective function is denoted by $F:\mathbb{R}^d \to \mathbb{R}$, as in~(\ref{eq: objective function}), where $d \in \mathbb{N}_{\geq 1}$ is the dimension of the parameter space. The constant $L > 0$ denotes the smoothness parameter of $f$, and $\nabla f(\cdot,\cdot)$ represents the stochastic gradient oracle.

Throughout the paper, $T \in \mathbb{N}_{\geq 1}$ denotes the total number of optimization steps. The step size and batch size are denoted by $\eta_t > 0$ and $n \in \mathbb{N}_{\geq 1}$, respectively. With this notation, the vanilla stochastic gradient descent (SGD) update would be given by
\begin{equation*}
    x_{t+1} = x_t - \eta_t \frac{1}{n} \sum_{i=1}^n \nabla f(x_t, \xi_t^{(i)}), \quad t = 1,\ldots,T,
\end{equation*}
with i.i.d samples $\xi_t^{(1)}, \ldots, \xi_t^{(n)} \sim \mathcal{D}$.

For a vector $v \in \mathbb{R}^d$, we denote its Euclidean norm by $|v|$. Since $F$ is not assumed to be convex, the goal is, for a given $\epsilon > 0$, to find a point $x \in \mathbb{R}^d$ such that $|\nabla F(x)| \leq \epsilon$.

Throughout the paper, we make the following standard assumptions.

\begin{assumptions}\label{ass:assumptions for standard SGD}
We assume access to an unbiased stochastic gradient oracle $\nabla f(x,\xi)$ with finite $p$-th moment for some $p \in (1,2]$. That is, there exists $\sigma \in (0,\infty)$ such that for all $x \in \mathbb{R}^d$:
\begin{itemize}
    \item[(i)] $\mathbb{E}\big[\nabla f(x,\xi)\big] = \nabla F(x)$,
    \item[(ii)] $\mathbb{E}\big[|\nabla f(x,\xi)|^{p}\big] \le \sigma^p$.
\end{itemize}

Further, the function $F$ is assumed to be $L$-smooth, i.e., $F$ is differentiable and for all $x_1, x_2 \in \mathbb{R}^d$,
\[
    |\nabla F(x_1) - \nabla F(x_2)| \leq L |x_1 - x_2|.
\]

Finally, we assume that $F$ is bounded from below by some $F^* \in \mathbb{R}$, and we denote the initialization gap by $\Delta_1 := F(x_1) - F^*$.
\end{assumptions}

\section{Main results}
In this section, we present our convergence results for per-sample clipped stochastic gradient descent under Assumptions~\ref{ass:assumptions for standard SGD}. For a fixed batch size $n \in \mathbb{N}$, and at each iteration $t = 1, \ldots, T$, let $\xi_t^{(1)}, \ldots, \xi_t^{(n)} \sim \mathcal{D}$ be i.i.d. with  corresponding stochastic gradients $\nabla f(x_t, \xi_t^{(1)}), \ldots, \nabla f(x_t, \xi_t^{(n)})$.

For fixed parameters $\alpha,\beta > 0$, we define the gradient estimator
\begin{equation}\label{eq: definition of estimator}
    G(x_t,\alpha,\beta) = \frac{1}{n} \sum_{k=1}^n \gamma_t^{(k)} \nabla f(x_t, \xi_t^{(k)}),
\end{equation}
where the clipping factors $\gamma_t^{(k)}$ are defined as
$
    \gamma_t^{(k)} \coloneqq \min \left\{1, \frac{\alpha k^{\frac{1}{\beta}}}{\left|\nabla f(x_t, \xi_t^{(k)})\right|} \right\}.
$

The parameter updates are then given by $x_{t+1} = x_t - \eta_t G(x_t,\alpha,\beta)$. We refer to this algorithm as per-sample clipped SGD (PS-Clip-SGD).

In the next section, we begin by establishing convergence of PS-Clip-SGD in expectation. Then, in Section~\ref{sec: high prob convergence}, we show that these convergence guarantees also hold with high probability. Finally, in Section~\ref{sec: Experiments}, we evaluate our method on a simple quadratic problem with additive noise, as well as on image recognition and language modeling tasks.

\subsection{Convergence in expectation}
The following theorem characterizes the convergence rate of PS-Clip-SGD in expectation.

\begin{theorem}\label{thm: in expectation convergence of SGD}
Under Assumptions \ref{ass:assumptions for standard SGD} and for any $\eta_t < 1/L$, the iterates generated by PS-Clip-SGD with $\alpha = \sigma$ and $\beta = p$ satisfy
\begin{align*}
\sum_{t=1}^T\frac{\eta_t\E(|\nabla F(x_t)|^2)}{{\sum_{t=1}^T\eta_t}} \leq \frac{2\Delta_1}{\sum_{t=1}^T\eta_t} +  8\sigma^2 n^{-\frac{2(p-1)}{p}}\;.
\end{align*}
\end{theorem}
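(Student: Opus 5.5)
The plan is to combine the standard descent lemma for $L$-smooth functions with a conditional bias--variance bound on the per-sample clipped estimator. Write $g_k := \nabla f(x_t,\xi_t^{(k)})$, $c_k := \sigma k^{1/p}$ and $G_t := G(x_t,\sigma,p)$. Let $\E_t$ denote conditional expectation given $x_t$; since the $\xi_t^{(k)}$ are fresh i.i.d.\ samples, Assumptions~\ref{ass:assumptions for standard SGD} apply conditionally on $x_t$.

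\textbf{Step 1 (one-step descent).} By $L$-smoothness,
\[
F(x_{t+1}) \le F(x_t) - \eta_t\langle \nabla F(x_t), G_t\rangle + \tfrac{L\eta_t^2}{2}|G_t|^2 .
\]
Since $\eta_t<1/L$, the last term is at most $\tfrac{\eta_t}{2}|G_t|^2$. Using the identity $-\langle a,b\rangle+\tfrac12|b|^2=\tfrac12|b-a|^2-\tfrac12|a|^2$ with $a=\nabla F(x_t)$ and $b=G_t$, we obtain
\[
F(x_{t+1}) \le F(x_t) - \tfrac{\eta_t}{2}|\nabla F(x_t)|^2 + \tfrac{\eta_t}{2}|G_t-\nabla F(x_t)|^2 .
\]
So it suffices to prove $\E_t|G_t-\nabla F(x_t)|^2 \le 8\sigma^2 n^{-2(p-1)/p}$. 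Taking full expectations, summing over $t$, telescoping with $F(x_{T+1})\ge F^*$, multiplying by $2$ and dividing by $\sum_t\eta_t$ then gives the claim.

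\textbf{Step 2 (bias--variance split).} Conditionally on $x_t$,
\[
\E_t|G_t-\nabla F(x_t)|^2 = \E_t|G_t-\E_tG_t|^2 + |\E_tG_t-\nabla F(x_t)|^2 ,
\]
because the cross term vanishes.

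\emph{Variance.} By independence of the samples,
\[
\E_t|G_t-\E_tG_t|^2 \le \frac1{n^2}\sum_k \E_t\min(|g_k|,c_k)^2 .
\]
Since $\min(u,c)^2\le u^p c^{2-p}$ for $p\le 2$, each term is at most $\sigma^p c_k^{2-p} = \sigma^2 k^{(2-p)/p}$. Comparing the sum with an integral gives $\sum_{k\le n}k^{(2-p)/p}\le \tfrac p2 (n+1)^{2/p}\lesssim n^{2/p}$. Hence the variance is at most a constant times $\sigma^2 n^{2/p-2}=\sigma^2 n^{-2(p-1)/p}$.

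\emph{Bias.} By unbiasedness, $\E_tG_t-\nabla F(x_t) = -\frac1n\sum_k \E_t[(1-\gamma_t^{(k)})g_k]$. Each summand is bounded by
\[
\E_t\bigl[|g_k|\1\{|g_k|>c_k\}\bigr] \le \frac{\E_t|g_k|^p}{c_k^{p-1}} \le \sigma k^{-(p-1)/p}.
\]
Averaging and using $\sum_{k\le n}k^{-(p-1)/p}\le p\,n^{1/p}\le 2n^{1/p}$, the bias is at most $2\sigma n^{-(p-1)/p}$, so its square is at most $4\sigma^2 n^{-2(p-1)/p}$.

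\textbf{Main obstacle.} The only delicate point is the constant bookkeeping in the two power sums. The variance sum is the one that needs care: a crude bound gives roughly $\sigma^2 n^{-2(p-1)/p}$ times a constant at most $2$, and together with the bias this stays below the stated factor $8$. Everything else is the routine descent-lemma telescoping. The growing thresholds $\sigma k^{1/p}$ are exactly what balance bias against variance at the optimal rate $n^{-(p-1)/p}$, so I expect no further difficulty.
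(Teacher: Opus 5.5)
Your proposal is correct and is essentially the paper's proof: the descent lemma plus the polarization identity (using $\eta_t<1/L$ to drop the $|G_t|^2$ term), then a conditional bias--variance split in which the bias is controlled via $\E[|g_k|\1\{|g_k|>c_k\}]\le\sigma^p c_k^{1-p}$ and the variance via $\min(u,c)^2\le u^p c^{2-p}$, as in the paper's lemma for the clipped mean. Your bookkeeping gives a constant of $6$ rather than $8$, and the variance sum needs no integral comparison, since it is trivially at most $n\cdot n^{(2-p)/p}=n^{2/p}$.
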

Note that we do not require a specific choice of $\eta_t$ to obtain this convergence rate, as long as the step size satisfies $\eta_t < 1/L$. 

The idea for the proof of this theorem is rather simple. Using the robustness of the estimator $G(x,\alpha,\beta)$, it is possible to show that
$
    \mathbb{E}\bigl[|\nabla F(x) - G(x,\sigma,p)|^2\bigr] = O\!\left(n^{-\frac{2(p-1)}{p}}\right)
$
for any $x \in \mathbb{R}^d$. Note that this would not be possible using a simple empirical mean, since the gradient noise is not assumed to have bounded variance. Once this error estimate is established, the remainder of the proof follows in a fairly standard way by using a Taylor expansion of $F$ and a telescoping sum to bound the norm of the gradients. The detailed proof of Theorem \ref{thm: in expectation convergence of SGD} can be found in Appendix \ref{Proof of in-expectation convergence}.

By an appropriate choice of step size and batch size, the convergence rate in Theorem \ref{thm: in expectation convergence of SGD} can be rewritten as follows.

\begin{corollary}\label{cor: in expectation upper bound}
Let $\eta_t = 1/(2L)$, and $n = \left\lceil \max \left\{1, \left(\frac{\sigma^2 T}{\Delta_1 L}\right)^{\frac{p}{2(p-1)}}\right\} \right\rceil$. Then, under Assumptions \ref{ass:assumptions for standard SGD}, we have that the iterates generated by PS-Clip-SGD with $\alpha = \sigma$ and $\beta = p$ satisfy
\begin{align*}
\frac{1}{T}\sum_{t=1}^T\E(|\nabla F(x_t)|^2)\leq 9\frac{\Delta_1 L}{T}\;.
\end{align*}
This corresponds to a sample complexity of
$
{O}\left(
\frac{\Delta_1 L}{\varepsilon^2}
+
\frac{\Delta_1 L}{\varepsilon^2}
\left(\frac{\sigma}{\varepsilon}\right)^{\frac{p}{p-1}}
\right)$ to achieve $\frac{1}{T}\sum_{t=1}^T\E(|\nabla F(x_t)|^2) <\epsilon^2$.
\end{corollary}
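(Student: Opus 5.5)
This is a direct specialization of Theorem~\ref{thm: in expectation convergence of SGD}. We set $\eta_t = 1/(2L)$, which satisfies $\eta_t < 1/L$. Then $\sum_{t=1}^T \eta_t = T/(2L)$ and the weighted average on the left becomes the uniform average $\frac1T\sum_t \E(|\nabla F(x_t)|^2)$. The theorem therefore gives
\begin{align*}
\frac{1}{T}\sum_{t=1}^T\E(|\nabla F(x_t)|^2) \leq \frac{4\Delta_1 L}{T} + 8\sigma^2 n^{-\frac{2(p-1)}{p}}.
\end{align*}

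The remaining task is to control the noise term with the chosen batch size. Since $n \geq \left(\frac{\sigma^2 T}{\Delta_1 L}\right)^{\frac{p}{2(p-1)}}$ and the exponent $-\frac{2(p-1)}{p}$ is negative, we get $n^{-\frac{2(p-1)}{p}} \leq \frac{\Delta_1 L}{\sigma^2 T}$. Hence $8\sigma^2 n^{-\frac{2(p-1)}{p}} \leq 8\Delta_1 L/T$. This is valid for $p\in(1,2]$; the ceiling and the max with $1$ only increase $n$.

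Adding the two terms gives $12\Delta_1 L/T$ rather than $9\Delta_1L/T$. So the stated constant does not come straight out of the theorem, and this is the one step that needs care. My first attempt would be to redo the final step of the theorem's proof with $\eta_t = 1/(2L)$ substituted. From $F(x_{t+1}) \le F(x_t) - \eta_t\langle \nabla F, G\rangle + \frac{L\eta_t^2}{2}|G|^2$, one can use the descent coefficient $\eta_t(1-L\eta_t)/2$ or a sharper Young split, which should give better factors in front of $\Delta_1$ and the error term. Alternatively, one could check whether the $8\sigma^2$ constant actually arises from a bound like $\E|G-\nabla F|^2 \le c\,\sigma^2 n^{-2(p-1)/p}$ with smaller $c$ once $\eta_t$ is fixed. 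If neither works, the corollary still holds with constant $12$, and the asymptotic content is unchanged.

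For the sample complexity, the total number of samples is $nT$. We require $C\Delta_1 L/T \le \varepsilon^2$, i.e.\ $T = \Theta(\Delta_1 L/\varepsilon^2)$. Substituting this $T$ gives $\frac{\sigma^2 T}{\Delta_1 L} \asymp \sigma^2/\varepsilon^2$, so $n \le 1 + (\sigma/\varepsilon)^{p/(p-1)}$ up to constants. Multiplying, $nT = O\bigl(\frac{\Delta_1 L}{\varepsilon^2} + \frac{\Delta_1 L}{\varepsilon^2}(\sigma/\varepsilon)^{\frac{p}{p-1}}\bigr)$, which is the claimed complexity.
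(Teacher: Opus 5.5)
You follow exactly the paper's route. The paper gives no separate proof of this corollary; it only says that Theorem~\ref{thm: in expectation convergence of SGD} ``can be rewritten'' with $\eta_t=1/(2L)$ and the stated $n$. Your arithmetic is right: that substitution gives $4\Delta_1L/T+8\sigma^2n^{-\frac{2(p-1)}{p}}\le 12\,\Delta_1L/T$, so the constant $9$ does not follow from the theorem as stated. As written, your proposal proves the corollary only with constant $12$. The discrepancy comes from the paper, not from you, but you leave it open with two speculative options instead of closing it.

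Your option (b) is the one that works, and it needs only one line in Lemma~\ref{lemm: generic convergence in espectation}. In \eqref{eq: lln term} the exponent $(2-p)/p$ is nonnegative and $k\le n$, so $\sum_{k=1}^n k^{\frac{2-p}{p}}\le n\cdot n^{\frac{2-p}{p}}=n^{2/p}$. This replaces the factor $4$ there by $1$. Conditional on $x_t$, the fluctuation term has mean zero and the bias term is deterministic, so $\E\left(|\nabla F(x_t)-G(x_t,\sigma,p)|^2\right)$ is exactly the sum of the two. Hence it is at most $(1+4)\sigma^2n^{-\frac{2(p-1)}{p}}$, and Proposition~\ref{prop: standard bound on iterates} then gives $4\Delta_1L/T+5\Delta_1L/T=9\,\Delta_1L/T$, the stated constant.

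Option (a) alone, with the lemma's constants unchanged, does not reach $9$. With $\eta_t=1/(2L)$ the exact expansion of the descent step is $-\frac{3}{8L}|\nabla F|^2-\frac{1}{4L}\langle\nabla F,b_t\rangle+\frac{1}{8L}\E|e_t|^2$, where $b_t$ is the clipping bias. The cross term forces a Young split, and the best such split gives a constant of about $9.4$ when both the fluctuation and the squared bias are bounded by $4\sigma^2n^{-\frac{2(p-1)}{p}}$.

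Your sample-complexity computation is fine. Like the paper, it implicitly assumes $\varepsilon^2\lesssim\Delta_1L$, so that $T\ge 1$ is of order $\Delta_1L/\varepsilon^2$.
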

Note, that the sample complexity matches the lower bound proved in \citep{zhang2020adaptive}.
To the best of our knowledge, this result improves upon the best known in-expectation convergence rates under heavy-tailed noise of both Clip-SGD and Normalized SGD. By comparison, using the same batch size as in the previous corollary and the step size $\eta_t = \sqrt{\Delta_1/LT}$, \citep{hubler2024from} prove the upper bound
\begin{equation}\label{eq: exp upper bound normalized SGD}
\frac{1}{T}\sum_{t=1}^T\E(|\nabla F(x_t)|) \leq 6\sqrt{\frac{\Delta_1 L}{T}}
\end{equation}
for Normalized SGD. Using Jensen's inequality, it is easy to see that the bound in Corollary \ref{cor: in expectation upper bound} is better than \eqref{eq: exp upper bound normalized SGD} by a factor of 2. Additionally, the bound proved by \citep{hubler2024from} only holds for the average of gradient norms, and the authors show that using normalized SGD, it is not possible to obtain a similar upper bound for the average of \textit{squared} gradient norms, which is a strictly stronger measure. Finally, note that the optimal step size used to establish the bound in \eqref{eq: exp upper bound normalized SGD} requires knowledge of the initialization gap $\Delta_1$.

One clear drawback of our method compared to Normalized SGD is that it requires knowledge of the problem parameters to define the gradient estimator. Specifically, it requires an upper bound on the maximal $p$-moment $\sigma^p$ of $|\nabla f(x,\xi)|$. However, we will see in the numerical experiments (Section \ref{sec: Experiments}), that our method is able to performs well even with untuned and unrefined choices of hyperparameters.

\subsection{Convergence with high probability}\label{sec: high prob convergence}

While in-expectation results guarantee convergence given sufficiently many runs of the optimization algorithm, in practice it is often infeasible to run the algorithm enough times for this type of convergence to be relevant. For this reason, it is typically more desirable to establish \emph{high-probability} convergence guarantees. Specifically, the goal is to show that, with probability at least $1-\delta$, the relevant convergence measure, such as the average of squared gradient norms, is bounded by a term with a polylogarithmic dependency on $\log(1/\delta)$.

The next theorem provides a high probability convergence result for PS-Clip-SGD.
\begin{theorem}\label{thm: high prob covergence of SGD}
    Let $\delta \in (0,1)$. Then, under Assumptions \ref{ass:assumptions for standard SGD} and for any $\eta_t <1/L$, we have that the iterates generated by PS-Clip-SGD with $\alpha = \frac{\sigma}{(\log(1/\delta)+1/4)^{1/p}}$ and $\beta = p$ satisfy
\begin{align*}
        \sum_{t=1}^T\frac{\eta_t|\nabla F(x_t)|^2}{{\sum_{t=1}^T\eta_t}} \leq \frac{2\Delta_1}{\sum_{t=1}^T\eta_t} +49\sigma^2\left(\frac{\log(1/\delta)+\log(T)+1/4}{n}\right)^{\frac{2(p-1)}{p}}
    \end{align*}
    with probability at least $1-\delta$.
\end{theorem}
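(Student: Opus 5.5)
The plan mirrors the proof of Theorem~\ref{thm: in expectation convergence of SGD}: a deterministic descent inequality, plus a high-probability bound on the estimator error $e_t := G(x_t,\alpha,p)-\nabla F(x_t)$, combined by a union bound over the $T$ iterations.

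\textbf{Descent step.} By $L$-smoothness and $\eta_t<1/L$,
\[
F(x_{t+1}) \le F(x_t) - \eta_t\langle \nabla F(x_t), G_t\rangle + \tfrac{L\eta_t^2}{2}|G_t|^2 .
\]
Writing $G_t=\nabla F(x_t)+e_t$ and using $\langle a, a+e\rangle \ge \tfrac12|a|^2-\tfrac12|e|^2$ together with $L\eta_t\le 1$, I expect to reach the pathwise inequality
\[
\tfrac{\eta_t}{2}|\nabla F(x_t)|^2 \le F(x_t)-F(x_{t+1}) + \eta_t|e_t|^2 ,
\]
which holds for every sample path. Telescoping and using $F\ge F^*$ gives
\[
\sum_t \eta_t|\nabla F(x_t)|^2 \le 2\Delta_1 + 2\sum_t\eta_t|e_t|^2 .
\]
It therefore suffices to show that, with probability at least $1-\delta$, simultaneously for all $t$,
\[
|e_t|^2 \le \tfrac{49}{2}\,\sigma^2\Bigl(\frac{\log(1/\delta)+\log T+1/4}{n}\Bigr)^{2(p-1)/p}.
\]
Because the bound on $|e_t|^2$ is uniform in $t$, the $\eta_t$-weighted average then inherits it. Note that no martingale argument over $t$ is needed: each $e_t$ is controlled conditionally on $x_t$.

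\textbf{Single-step concentration.} Fix $x=x_t$, condition on $\mathcal F_{t-1}$, and set $\delta'=\delta/T$, so that $\log(1/\delta')=\log(1/\delta)+\log T$. Let $Y_k=\gamma^{(k)}\nabla f(x,\xi^{(k)})$. These are independent vectors with $|Y_k|\le \alpha k^{1/p}\le \alpha n^{1/p}$. Decompose
\[
e_t=\frac1n\sum_k (Y_k-\E Y_k) + \frac1n\sum_k(\E Y_k-\nabla F(x)).
\]
\emph{Bias term.} Since $|\E Y_k-\nabla F(x)|\le \E[|\nabla f|\1\{|\nabla f|>\alpha k^{1/p}\}]\le \sigma^p(\alpha k^{1/p})^{1-p}$, summing over $k$ gives at most
\[
\sigma^p\alpha^{1-p}n^{-1}\sum_k k^{(1-p)/p}\lesssim \sigma^p\alpha^{1-p}n^{(1-p)/p}.
\]
\emph{Fluctuation term.} I apply a vector Bernstein (Pinelis-type) inequality in Hilbert space. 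Each $Y_k$ is bounded by $M=\alpha n^{1/p}$, and the summed variance satisfies
\[
\sum_k\E|Y_k|^2\le \sum_k \sigma^p(\alpha k^{1/p})^{2-p}\lesssim \sigma^p\alpha^{2-p} n^{2/p}.
\]
Bernstein then gives, with probability at least $1-\delta'$,
\[
\Bigl|\tfrac1n\sum_k (Y_k-\E Y_k)\Bigr| \lesssim \tfrac1n\Bigl(\sqrt{\sigma^p\alpha^{2-p}n^{2/p}\,\ell}+\alpha n^{1/p}\ell\Bigr),
\qquad \ell=\log(1/\delta')+\tfrac14
\]
(up to the $+1/4$-type constant from the dimension-free Pinelis bound).

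\textbf{Choosing $\alpha$ and combining.} With $\alpha=\sigma\ell_0^{-1/p}$, where $\ell_0=\log(1/\delta)+1/4$, and using $\ell_0\le\ell$, the three contributions should each be of order $\sigma(\ell/n)^{(p-1)/p}$:
\begin{itemize}
\item the bias term becomes $\sigma\ell_0^{(p-1)/p}n^{(1-p)/p}$;
\item the variance part of the fluctuation becomes $\sigma n^{1/p-1}\ell^{1/2}\ell_0^{-(2-p)/(2p)}$;
\item the range part of the fluctuation becomes $\sigma\ell\,\ell_0^{-1/p}n^{1/p-1}$.
\end{itemize}
Squaring and collecting the constants should produce the factor $49$ (for instance $7^2$ from summing three terms with small constants). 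A union bound over $t=1,\dots,T$ completes the proof.

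\textbf{Main obstacle.} The hardest step is the exponent bookkeeping in the last bullet. The clipping level $\alpha$ is tuned to $\log(1/\delta)$ alone, not to $\log(T/\delta)$, so terms such as $\ell\,\ell_0^{-1/p}$ must be bounded by $\ell^{(p-1)/p}$ times a constant. This does not hold in general; it works only if the threshold growth $k^{1/p}$ is exploited more finely. One option is to exploit the fact that $\alpha k^{1/p}$ is small for small $k$, applying Bernstein blockwise in $k$ or using Freedman's inequality on the partial sums. Another is to absorb the mismatch by assuming $\log T\lesssim n$, since the claim is vacuous when $\ell>n$: the bound then exceeds $49\sigma^2$, which controls $|e_t|^2$ trivially up to constants. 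I would try this trivial-regime split first to handle the case $\ell>n$, and in the regime $\ell\le n$ use $\ell/n\le1$ to lower exponents where needed.
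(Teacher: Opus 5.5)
Your skeleton is exactly the paper's: the descent inequality of Proposition~\ref{prop: standard bound on iterates}, the single-step bound of Corollary~\ref{cor: genereal high prob bound}, and a union bound over $t$. You correctly spot that the bookkeeping breaks, but the proposal leaves this open, and neither remedy can close it. With the clipping level tuned to $\ell_0=\log(1/\delta)+1/4$, the per-step estimate at confidence $\delta/T$ is false.

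To see this, take $d=1$ and $Y=M$ with probability $q=\ell_0/n$, $Y=0$ otherwise, where $M=\alpha n^{1/p}$, so that $\E|Y|^p=\sigma^p$. Each nonzero sample is clipped to $\alpha k^{1/p}$. Hence if $m$ of the samples with index $k\ge n/2$ are nonzero, then $|e_t|\ge\sigma n^{-(p-1)/p}\bigl(2^{-1/p}m\,\ell_0^{-1/p}-\ell_0^{(p-1)/p}\bigr)$. Exceeding $C\sigma(\ell/n)^{(p-1)/p}$ therefore needs only $m\asymp\ell_0+C\ell_0^{1/p}\ell^{(p-1)/p}$. For $n$ large and fixed $\delta$, this event has probability at least $(\ell_0/(2m))^m e^{-\ell_0}=e^{-O(\sqrt{\ell}\log\ell)}$, which is far above $\delta/T\approx e^{-\ell}$ once $\log T\gg\log(1/\delta)$. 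Since the per-step statement itself fails, no sharper single-step tool (blockwise Bernstein, Freedman) can deliver it.

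The regime split does not help either. For $\ell>n$ the theorem is indeed trivial, but the reason is that $|\nabla F(x_t)|\le\E|\nabla f(x_t,\xi)|\le\sigma$ caps the left-hand side at $\sigma^2$. It is not that $|e_t|^2$ is controlled: deterministically you only have $|e_t|\le\sigma(n/\ell_0)^{1/p}+\sigma$. For $\ell\le n$, the mismatch is a pure power of $\ell/\ell_0$ with the powers of $n$ already balanced, so $\ell/n\le1$ buys nothing. The example above lives in this regime.

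The paper's own proof passes over exactly this point. It applies the corollary at level $\delta/T$ and asserts that the union bound ``follows immediately'', which is legitimate only if $\alpha=\sigma/(\log(T/\delta)+1/4)^{1/p}$. With that choice, your three contributions carry constants $2$ (bias), $\sqrt{8}$ (variance) and $2$ (range) in the paper's bookkeeping. So $|e_t|\le7\sigma(\ell/n)^{(p-1)/p}$ for all $t$ with probability $1-\delta$, and the bound with $49$ follows, provided you keep coefficient $1$ in the descent step. Polarization and $L\eta_t\le1$ give $-\eta_t\langle\nabla F(x_t),G_t\rangle+\frac{L\eta_t^2}{2}|G_t|^2\le-\frac{\eta_t}{2}\bigl(|\nabla F(x_t)|^2-|e_t|^2\bigr)$. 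Your factor $2$ would instead force a per-step constant of $7/\sqrt{2}$, which these constants do not give.

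If you keep $\alpha$ as stated, the union-bound route can only give the claimed bound multiplied by $(\ell/\ell_0)^{2/p}$. This comes from Lemma~\ref{lem: genereal high prob bound} applied with $\alpha'=\sigma(\ell/\ell_0)^{1/p}\ge\sigma$, which is the mechanism behind the paper's remark for $\alpha=\sigma$. Proving the statement exactly as written would require exploiting the averaging in $\sum_t\eta_t|e_t|^2$ across $t$. That is precisely the argument over $t$ that your plan declares unnecessary.
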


The proof idea for this result is analogous to the one for its in-expectation counterpart. We first establish a high-probability upper bound for $|\nabla f(x)-G(x,\alpha,\beta)|^2$. Specifically, using a similar argument to \citep{bandits}, we show that with probability at least $1-\delta$
\begin{equation*}
    |\nabla F(x)-G(x,{\sigma}/{(\log(1/\delta)+1/4)^{1/p}},p)|^2\leq  49\sigma^2\left(\frac{\log(1/\delta)+1/4}{n}\right)^{\frac{2(p-1)}{p}} \;.
\end{equation*}
Union bound is then applied to estimate the maximum of this error over the $T$ steps. The rest of the proof follows the same steps as the in-expectation convergence result.
The detailed poof of Theorem \ref{thm: high prob covergence of SGD} can be found in Appendix \ref{Proof of high-probability convergence}. 

Similarly to its in-expectation counterpart, using an appropriate choice of step and batch size, Theorem \ref{thm: high prob covergence of SGD} implies the following corollary.
\begin{corollary}\label{cor: high prob convergence}
    Let $\delta \in (0,1)$, $\eta_t = 1/(2L)$ and $n =\left\lceil\max \left\{1, \left(\frac{\sigma^2 T}{\Delta_1 L}\right)^{\frac{p}{2(p-1)}}\right\}\right\rceil$. Then, under Assumptions \ref{ass:assumptions for standard SGD}, we have that the iterates generated by PS-Clip-SGD with $\alpha = \frac{\sigma}{(\log(1/\delta)+1/4)^{1/p}}$ and $\beta = p$ satisfy
    \begin{align*}
        \frac{1}{T}\sum_{t=1}^T|\nabla F(x_t)|^2\leq \left(3+49\left(\log(\delta^{-1})+\log(T)\right)\right)^{\frac{2(p-1)}{p}}\frac{\Delta_1 L}{T}\;
    \end{align*}
    with probability at least $1-\delta$.
\end{corollary}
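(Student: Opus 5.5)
The plan is to specialize Theorem~\ref{thm: high prob covergence of SGD} to the constant step size $\eta_t = 1/(2L)$ and the stated batch size $n$. After that, the proof reduces to an absorption inequality comparing the resulting bound with the claimed prefactor. Throughout, write $q \coloneqq \frac{2(p-1)}{p} \in (0,1]$ and $A \coloneqq \log(\delta^{-1}) + \log(T) \ge 0$; note that $A \ge 0$ because $\delta<1$ and $T \ge 1$.

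\textbf{Step 1 (specialize the theorem).} Since $\eta_t = 1/(2L) < 1/L$, Theorem~\ref{thm: high prob covergence of SGD} applies with $\alpha = \sigma/(\log(1/\delta)+1/4)^{1/p}$ and $\beta=p$. We have $\sum_{t=1}^T \eta_t = T/(2L)$, and the weights $\eta_t/\sum_s \eta_s$ all equal $1/T$. Hence, with probability at least $1-\delta$,
\begin{equation*}
\frac{1}{T}\sum_{t=1}^T |\nabla F(x_t)|^2 \le \frac{4\Delta_1 L}{T} + 49\sigma^2 \Bigl(\frac{A+1/4}{n}\Bigr)^{q}.
\end{equation*}

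\textbf{Step 2 (use the batch size).} By the choice of $n$ we have $n \ge (\sigma^2 T/(\Delta_1 L))^{1/q}$, so $\sigma^2 n^{-q} \le \Delta_1 L/T$. The second term is therefore at most $49 (A+1/4)^q \Delta_1 L/T$, and the bound becomes
\begin{equation*}
\bigl(4 + 49(A+1/4)^q\bigr)\frac{\Delta_1 L}{T}.
\end{equation*}
If the maximum in the definition of $n$ is attained at $1$, then $\sigma^2 T/(\Delta_1 L) \le 1$, and the same inequality $\sigma^2 n^{-q}\le \Delta_1L/T$ still holds.

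\textbf{Step 3 (absorb into the stated prefactor).} It remains to show
\begin{equation*}
4 + 49(A+1/4)^q \le (3+49A)^q .
\end{equation*}
I expect this to be the main obstacle, and I am not convinced it can be done with the constants as stated.

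The first thing I would try is to combine the constants using concavity of $x\mapsto x^q$. This goes in the wrong direction: for $q\le 1$ one has $(a+b)^q \le a^q+b^q$, so merging the terms inside the power only \emph{decreases} the quantity.

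In fact, at $q=1$ (that is, $p=2$) the required inequality reads $16.25 + 49A \le 3 + 49A$, which is false. Near $A=0$, the right-hand side is about $3^q \le 3 < 4$, so it also fails for $q<1$.

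So if the inequality is to be rescued, one must sharpen the inputs rather than the final algebra. Concretely, I would go back into the proof of Theorem~\ref{thm: high prob covergence of SGD} and try to:
\begin{itemize}
\item[(a)] replace the additive term $2\Delta_1/\sum_t\eta_t$ by something that can be placed inside the $q$-th power, and
\item[(b)] use the freedom in $n$, for instance by enlarging $n$ by a factor such as $49^{1/q}$, so that the per-step estimator error fits under $(3+49A)^q$.
\end{itemize}

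Failing that, what Steps 1--2 honestly yield is the prefactor $4+49(\log\delta^{-1}+\log T+1/4)^{q}$. This has the same $\mathrm{polylog}(T/\delta)$ dependence and suffices for the sample-complexity conclusion, though it does not match the constant as written in the statement.
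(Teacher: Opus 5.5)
Your Steps~1--2 are exactly the derivation the paper intends; it gives no proof beyond saying the corollary follows from Theorem~\ref{thm: high prob covergence of SGD} by the choice of $\eta_t$ and $n$. Both steps are correct: with $\sum_t\eta_t=T/(2L)$ the theorem yields the prefactor $4+49(A+1/4)^q$. Your objection to Step~3 is also correct, and it holds uniformly. For every $q\in(0,1]$ and $A\ge 0$, subadditivity of $x\mapsto x^q$ and $49^q\le 49$ give
\[
(3+49A)^q\le 3^q+49^qA^q\le 3+49(A+1/4)^q<4+49(A+1/4)^q .
\]
So what blocks you is not a missing idea but an error in the statement, which is false as written. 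Neither rescue plan can work, and (b) is not even available, since $n$ is fixed by the hypotheses.

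A counterexample needs no noise. Take $d=1$ and the exact oracle $\nabla f(x,\xi)=\nabla F(x)$. Let $F$ be the Huber function, $F(x)=\frac L2x^2$ for $|x|\le\sigma/L$ and $F(x)=\sigma|x|-\frac{\sigma^2}{2L}$ otherwise. It is $L$-smooth with $F^*=0$ and $|\nabla F|\le\sigma$, so Assumptions~\ref{ass:assumptions for standard SGD} hold.

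For $T=1$ and $0<|x_1|\le\sigma/L$, the left-hand side is the deterministic number $|\nabla F(x_1)|^2=L^2x_1^2=2L\Delta_1$, whatever $n$, $\alpha$, $\eta$ are. The claimed bound is $(3+49\log(1/\delta))^q\Delta_1L$, which is below $2\Delta_1L$ whenever $(3+49\log(1/\delta))^q<2$. For example, $p=1.2$, $\delta=0.99$ gives a prefactor $\approx1.52$, and $p=1.1$, $\delta=1/2$ gives $\approx1.93$. In these cases the event has probability $0$.

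This is not a $T=1$ artifact. If also $|x_1|\le\alpha/L$, no sample is ever clipped and the method is gradient descent $x_{t+1}=x_t/2$. Then $\sum_{t\le T}|\nabla F(x_t)|^2=\frac83(1-4^{-T})L\Delta_1$, which exceeds $(3+49(\log\delta^{-1}+\log T))^q\Delta_1L$ for $p$ near $1$. For instance, $T=2$, $\delta=1/2$, $p=1.1$ gives $2.5$ versus $\approx2.17$.

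So the correct conclusion is the one you reached at the end: with probability at least $1-\delta$,
\[
\frac1T\sum_{t=1}^T|\nabla F(x_t)|^2\le\bigl(4+49(\log\delta^{-1}+\log T+\tfrac14)^{q}\bigr)\frac{\Delta_1L}{T}.
\]

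One caveat on Step~1: this bound is only as strong as Theorem~\ref{thm: high prob covergence of SGD} as stated. Its proof union-bounds over the $T$ steps at per-step confidence $\delta/T$, but the clipping level $\alpha=\sigma/(\log(1/\delta)+1/4)^{1/p}$ is tuned to $\delta$, not $\delta/T$. Applying Lemma~\ref{lem: genereal high prob bound} with the level rescaled to be $\ge\sigma$ then gives the per-step squared error only up to an extra factor $\bigl(\frac{\log(T/\delta)+1/4}{\log(1/\delta)+1/4}\bigr)^{2/p}$. Choosing $\alpha=\sigma/(\log(T/\delta)+1/4)^{1/p}$ makes your fallback bound fully rigorous.
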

Compared to the high-probability convergence rate proved in~\citep{hubler2024from}, Corollary~\ref{cor: high prob convergence} incurs an additional factor of $\log(T)^{\frac{p-1}{p}}$. On the other hand, our convergence rate has a considerably improved dependence on the failure probability $\delta$. As we demonstrate in Section~\ref{sec: Experiments}, this improved $\log(1/\delta)$ dependence is also observed empirically. Additionally, as in the in-expectation setting, the optimal step size used in \citep{hubler2024from} requires knowledge of the initialization gap $\Delta_1$, and the bound proved therein does not hold for the sum of \textit{squared} gradient norms.

To the best of our knowledge, the strongest high-probability convergence rate for Clip-SGD is given in~\citep{improved-convergence}, where the authors establish an upper bound of order
\begin{equation}\label{eq: improved eq} O\left({\log(1/\delta)^{\frac{p}{p-1}}}{T^{\frac{2(1-p)}{3p-2}}}\right)
\end{equation}
for the sum of squared gradient norms. We note that our result achieves both a better dependence on $\log(1/\delta)$ than \eqref{eq: improved eq} and an improved sample complexity.

Furthermore, the result in \citep{improved-convergence} requires a specific $\log(1/\delta)$- and $\Delta_1$-dependent choice of step sizes and clipping thresholds to obtain the stated convergence rate. In contrast, in Theorem~\ref{thm: high prob covergence of SGD}, we only require $\eta_t < 1/L$. Moreover, the following remark shows that the clipping threshold in our result can also be chosen independently of $\delta$ while still ensuring high-probability convergence. This observation suggests that our algorithm should be less sensitive to the choice of hyperparameters than existing methods.

\begin{remark}
If in Theorem~\ref{thm: high prob covergence of SGD} we assume $\delta \leq e^{-3/4}$, and choose $\alpha = \sigma$ as in Theorem~\ref{thm: in expectation convergence of SGD}, then with probability at least $1 - \delta$, we have that
\begin{align*}
        \sum_{t=1}^T\frac{\eta_t|\nabla F(x_t)|^2}{{\sum_{t=1}^T\eta_t}} \leq \frac{2\Delta_1}{\sum_{t=1}^T\eta_t} +49\sigma^2\frac{\left(\log(1/\delta)+\log(T)+1/4\right)^2}{n^{\frac{2(p-1)}{p}}}\;.
    \end{align*}
    Hence, if we choose $n =\left\lceil\max \left\{1, \left(\frac{\sigma^2 T}{\Delta_1 L}\right)^{\frac{p}{2(p-1)}}\right\}\right\rceil$, we obtain that with probability $1-\delta$,
     \begin{equation}\label{eq: delta ind bound}
        \frac{1}{T}\sum_{t=1}^T|\nabla F(x_t)|^2\leq \left(3+49\left(\log(\delta^{-1})+\log(T)\right)\right)^2\frac{\Delta_1 L}{T}\;.
    \end{equation}
\end{remark}

Note that even in (\ref{eq: delta ind bound}) the dependency on $\log(1/\delta)$ is better than in (\ref{eq: improved eq}), since $p/(p-1)\geq2$ for any $p\in(1,2]$.

% \begin{equation}
%     \frac{\left(\Delta_1 L\right)^{\frac{3p-2}{2p-1}}}{\epsilon^{\frac{6p-4}{2p-1}}}+\frac{\left(\sigma^{2p}(\Delta_1 L)^{p-2}\right)^{\frac{3p-2}{2(p-1)}}}{}
% \end{equation}

\section{Experiments}\label{sec: Experiments}
\subsection{Quadratic function with noise}
We compare the performance of Normalized-SGD, Clip-SGD and PS-Clip-SGD on a simple quadratic problem. Specifically, we minimize the function $f(x,\xi)=\frac{1}{2}|x|^2+\langle x,\xi\rangle$, where $x\in \R^{10}$ and $\xi\in \R^{10}$ is a random vector with i.i.d. components drawn from a symmetrized Pareto distribution with tail index $p >1$. To estimate the gradient at a point $x$, we use a batch size of $n=64$, where each sample $y_i$ in the batch has the form $y_i = x +\xi_i$ and new $\xi_i \in \R^{10}$ are sampled at each iteration.

We recall that if $g(x)$ is an unbiased gradient estimator, the parameter updates for Normalized-SGD and Clip-SGD with step size $\eta>0$ and clipping threshold $\gamma$ are given by $x_{t+1}= x_t-\eta {g(x_t)}/{|g(x_t)|}$ and $
    x_{t+1}= x_t-\eta \min\left\{1, {\gamma}/{|g(x_t)|}\right\} g(x_t)$
    respectively.
% The gradient estimators for the three algorithms are therefore given by
% \begin{equation*}
%     g^T(x,\alpha) = \frac{1}{64}\sum_{k=1}^{64}\min\left\{1, \frac{(\alpha k^{\frac{1}{p}})}{|(x+\xi_i)|}\right\}(x+\xi_i)\;,
% \end{equation*}
% for PS-Clip-SGD, and by
% \begin{equation*}
%     g(x) = \frac{1}{64}\sum_{k=1}^{64}(x+\xi_i)\;,
% \end{equation*}
% for Normalized-SGD and Clip-SGD. The gradient step is then given by
% \begin{align*}
%     x_{t+1}&= x_t-\eta_{PSCSGD} \;g^T(x_t,\alpha), \quad \text{for PS-Clip-SGD}\\
%     x_{t+1}&= x_t-\eta_{NSGD} \; \frac{g(x_t)}{|g(x_t)|}, \quad \text{for Normalized-SGD}\\
%     x_{t+1}&= x_t-\eta_{CSGD} \; \min\left\{1, \frac{\gamma}{|g(x_t)|}\right\} g(x_t), \quad \text{for Clip-SGD .}
% \end{align*}

We run each algorithm for $T = 2000$ iterations with a constant step size $\eta = 0.01$. In this experiment, we do not perform any parameter tuning and instead set $\alpha = \beta = \gamma = 1$. We also conducted the same experiment with tuned hyperparameter for each algorithm; details are provided in Appendix~\ref{app: Quadratic function with noise}. We mention that we did perform the experiment using vanilla SGD as well. However, likely due to the heavy noise, vanilla SGD performed very poorly in our experiments and is therefore not presented here.

Figure~\ref{fig:untuned convergence} shows the average performance over $10$ runs for each algorithm and for different values of the noise parameter $p > 1$. Note that, due to the heavy-tailed noise, the gradient norms are often larger than one. As a result, Normalized SGD and Clip-SGD appear indistinguishable in the figure. We refer to Table~\ref{tab: performance comparison quadratic prob untuned params} for a more detailed comparison of the algorithms performance. From Figure~\ref{fig:untuned convergence}, we can see that PS-Clip-SGD outperforms both Clip-SGD and Normalized SGD across all three noise regimes. Additionally, we observe that while heavier tails significantly slow down the convergence of Clip-SGD and Normalized SGD, the performance of PS-Clip-SGD remains stable across different values of the noise parameter, which confirms the robustness of this method to heavy tailed noise.

\begin{figure}[h]
    \centering
    \includegraphics[width=0.32\textwidth]{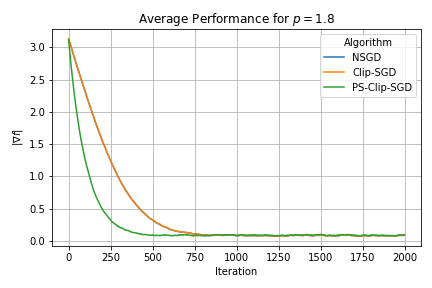}
    \hfill
    \includegraphics[width=0.32\textwidth]{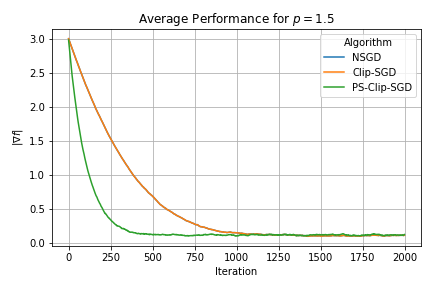}
    \hfill
    \includegraphics[width=0.32\textwidth]{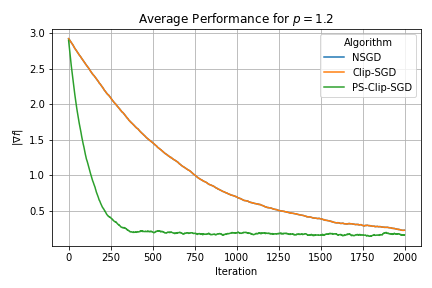}
    \caption{Performance of Normalized-SGD, Clip-SGD and PS-Clip-SGD for different noise regimes without parameter tuning. Due to the choice of parameters, Normalized SGD and Clip-SGD are indistinguishable in the plot, see Table \ref{tab: performance comparison quadratic prob untuned params} for more details.}
    \label{fig:untuned convergence}
\end{figure}
We also wish to empirically investigate the high-probability convergence properties of PS-Clip-SGD. To this end, we run the algorithm $10^4$ times for $T = 100$ iterations and, for each run, compute the average gradient norm over the $T$ iterations. We again perform this experiment without hyperparameter tuning.
Figure~\ref{fig: high prob convergence} shows the $(1 - \delta)$-quantiles of the average gradient norm plotted against $\log(1/\delta)$. As expected, the dependence is sublinear, which is consistent with our theoretical high-probability convergence results. Additionally, we observe that PS-Clip-SGD outperforms both Normalized SGD and Clip-SGD in this benchmark. This is consistent with the improved dependence on $\log(1/\delta)$ in the upper bound for PS-Clip-SGD compared to Normalized SGD and Clip-SGD, as discussed in Section~\ref{sec: high prob convergence}.

\begin{figure}[h]
    \centering
    \includegraphics[width=0.32\textwidth]{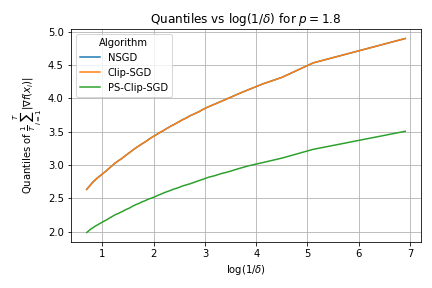}
    \hfill
    \includegraphics[width=0.32\textwidth]{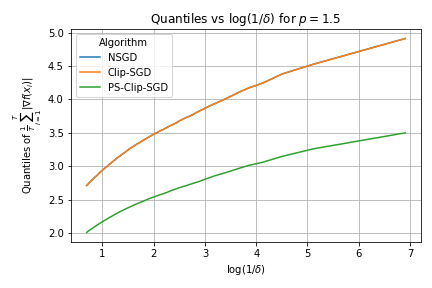}
    \hfill
    \includegraphics[width=0.32\textwidth]{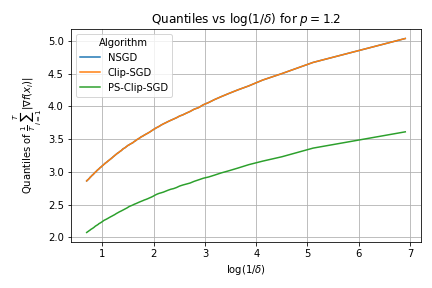}
    \caption{$(1-\delta)$-quantile of the average gradient norm after $T = 100$ training steps, plotted against $\log(1/\delta)$ for the three algorithms and different noise regimes. As before, due to the choice of parameters, Normalized SGD and Clip-SGD are indistinguishable in the plot.}
    \label{fig: high prob convergence}
\end{figure}
\subsection{Training AlexNet with per-sample clipping}\label{sec: Training AlexNet with per-sample clipping}
\begin{figure}[h]
    \centering
    \includegraphics[width=0.35\textwidth]{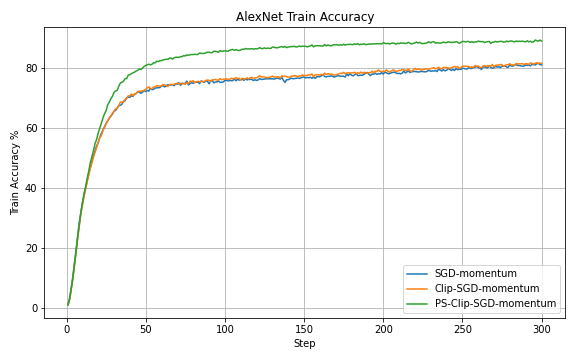}
    \includegraphics[width=0.35\textwidth]{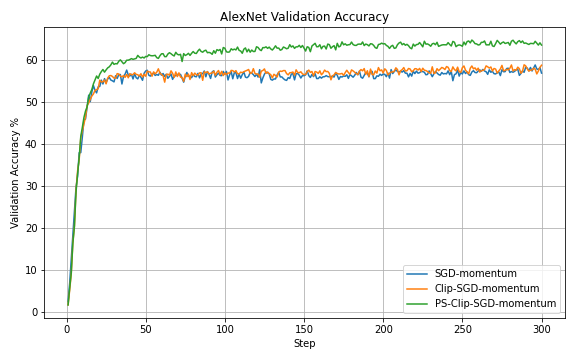}
    \hfill
    \caption{Training and validation accuracies of SGD, Clip-SGD and PS-Clip-SGD, all with momentum, for training AlexNet on the CIFAR-100 dataset}
    \label{fig: AlexNet performance}
\end{figure}
\begin{table}[h]
\caption{Performance comparison of SGD, Clip-SGD and PS-Clip-SGD, all with momentum, for training AlexNet on the CIFAR-100 dataset.}
\centering
\begin{tabular}{lccccc}
    \toprule
    Algorithm & \makecell{Train\\Accuracy}&  \makecell{Val\\Accuracy} &\makecell{Test\\Accuracy}  & \makecell{Total Training\\Time (min)} & \makecell{Val Accuracy \\ after 119 min} \\
    \midrule
    SGD-mom & 81.04 & 58.90 & 57.96 &\textbf{119} &58.90 \\
    Clip-SGD-mom & 81.16 & 58.94&58.84 & 122 & 58.94 \\
    PS-Clip-SGD-mom & \textbf{88.85} & \textbf{64.84} & \textbf{64.64}& 162 &  \textbf{64.40}\\
    \bottomrule
\end{tabular}\label{tab: AlexNet training}
\end{table}
In this experiment, we train AlexNet \citep{alexnet-paper} on the CIFAR-100 dataset \citep{CIFAR-100} and compare the performance of vanilla SGD, Clip-SGD, and PS-Clip-SGD. In all three cases, we use momentum and weight decay. The model is trained for 300 epochs with a learning rate of 0.01, momentum parameter $0.9$ and weight decay of $10^{-4}$. The clipping threshold is set to $45.0$ for PS-Clip-SGD and $15.0$ for Clip-SGD. We refer to Appendix \ref{app: alexnet} for more details on the choice of hyperparameters. Figure~\ref{fig: AlexNet performance} shows the training and validation accuracy for the three methods, while Table~\ref{tab: AlexNet training} provides additional details on training performance.

From the last column of Table~\ref{tab: AlexNet training}, we observe that, even after accounting for the additional computational cost, PS-Clip-SGD clearly outperforms both vanilla SGD and Clip-SGD. Moreover, Figure~\ref{fig: AlexNet performance} shows that PS-Clip-SGD reaches a validation accuracy above $60\%$ after only 40 epochs (approx. 22 minutes), whereas the other methods fail to reach $59\%$ throughout training.

The similar performance of standard SGD and Clip-SGD can be explained by the high clipping threshold: only about $0.13\%$ of gradients are clipped during training, so Clip-SGD behaves mostly like its unclipped counterpart. Different clipping thresholds and step-sizes were also tested for Clip-SGD, but did not improve performance; see Appendix \ref{app: alexnet} for details.

In contrast, per-sample clipping leads to a clear performance improvement. Our results suggest that this may be due to the presence of a small number of outliers with very large gradients, which are effectively controlled by PS-Clip-SGD, thereby improving convergence. Additional details are provided in Appendix~\ref{app: alexnet}.

Despite the clear advantages of PS-Clip-SGD, it is important to discuss the additional computational cost associated with per-sample gradient computations, which we consider the main drawback of this method. For this experiment, we used Opacus' Ghost Clipping API \citep{lee_kifer_2021_fast_clipping_popets, ghost_clipping, opacus_clipping_blog_2024} to compute per-sample gradient norms. Although this enables efficient per-sample computations, PS-Clip-SGD was still approximately $1.3\times$ slower than standard Clip-SGD. In this case, the improved convergence speed was sufficient to offset the additional computational time. However, this may not hold for larger architectures or batch sizes. For instance, when evaluating our method on a GPT-2 model, the performance gains from per-sample clipping were outweighed by the increased computational time, making the method impractical for that task.

In the next section, however, we demonstrate that, when using gradient accumulation, mini-batch gradient clipping can achieve performance improvements similar to per-sample clipping while incurring virtually no additional computational cost.

\subsection{Training GPT-2 with mini-batch clipping}\label{sec: Training GPT-2 with mini-batch clipping}

A common method to overcome hardware limitations when training large models is gradient accumulation. Given a mini-batch size $m \in \mathbb{N}$ and a number of accumulation steps $k \in \mathbb{N}$, gradient accumulation allows one to \textit{simulate} a batch size of $m \cdot k$ by summing the gradients computed at each accumulation step and performing a parameter update only after $k$ iterations.

To the best of our knowledge, when gradient accumulation is combined with gradient clipping, the general consensus is that gradients should be clipped only once after all $k$ accumulation steps have been completed, with many sources explicitly advising against clipping each mini-batch gradient \citep{aiwiki_gradient_accumulation,apxml_gradient_clipping_accumulation, brenndoerfer_2026_gradient_accumulation,karpathy_nanogpt_2022}. The rationale is that gradient accumulation is intended to approximate the gradient that would be obtained using a single batch of size $m \cdot k$, and clipping at each accumulation step would introduce an unwanted bias and lead to wrong gradients. However, based on the insights from the previous sections, we argue that clipping the gradients computed after each accumulation step can actually improve the training performance of large models. Note that, in contrast to per-sample gradient clipping, this approach introduces virtually no additional computational cost, since the gradients for each mini-batch are computed regardless of when clipping is applied.

% We specifiy that by clipping at each accumulation step we mean that for a mini-batch size of $m$ and number of accumulation steps $k$ the gradient estimator at step $t$ is given by

% \begin{equation*}
%     \sum_{i = 1}^k \left(\sum_{j=1}^m \nabla f(x_t,\xi_t^{(j,k)})\right)\min\left\{1, \frac{\gamma_t}{\left|\sum_{j=1}^m \nabla f(x_t,\xi_t^{(j,k)})\right|} \right\} \;.
% \end{equation*}

In this experiment, we trained a GPT-2 model with $124$M parameters \citep{karpathy_nanogpt_2022} from scratch using the OpenWebText dataset \citep{Gokaslan2019OpenWeb}. We used the same training loop for both mini-batch-clipped SGD (MB-Clip-SGD) and \textit{standard} Clip-SGD. The only difference being that, for MB-Clip-SGD, we clip the gradients computed for each mini-batch, whereas for standard Clip-SGD, clipping is applied only once after all accumulation steps have been completed.

 The model was trained for $5000$ steps using the AdamW optimizer. We increase the learning rate linearly for the first 500 steps and use cosine learning rate decay after that; further details are provided in Appendix~\ref{app:Training GPT-2}. Gradient clipping was applied with a threshold of $1.0$ for both methods. We used gradient accumulation with $64$ accumulation steps and a mini-batch size of $8$, which is the largest mini-batch size that fits on our hardware. This corresponds to approximately $5\cdot 10^5$ tokens per batch. Note that this batch size and clipping threshold are the same used to train the $124$M parameters model in \citep{GPT3-paper}. We perform the experiment with three learning rates, the results are summarized in Table \ref{tab: performance comparison GPT-Training}. As we can see, mini-batch clipping outperforms standard gradient clipping regardless of the learning rate. While the performance improvement is relatively modest, we believe this to be an interesting observation, as it contradicts the prevailing view that clipping should be applied only after all accumulation steps have been completed.

\begin{table}[h]
  \caption{Performance comparison of Clip-SGD and MB-Clip-SGD for training GPT-2 with different learning rates.}
  \label{tab: performance comparison GPT-Training}
  \centering
  \begin{tabular}{clcc}
    \toprule
    Learning Rate    & Algorithm  & Final Val Loss & \\
    \midrule
    \multirow{2}{*}{$6\cdot 10^{-4}$} 
        & Clip-SGD         & $3.3495$ \\
        & MB-Clip-SGD      & $\mathbf{3.3405}$ \\
    \midrule
    \multirow{2}{*}{$8\cdot 10^{-4}$} 
        & Clip-SGD         & $3.3114$  \\
        & MB-Clip-SGD      & $\mathbf{3.3059}$ \\
    \midrule
    \multirow{2}{*}{$1\cdot 10^{-3}$} 
        & Clip-SGD         & $3.2922$  \\
        & MB-Clip-SGD      & $\mathbf{3.2830}$  \\
    \bottomrule
  \end{tabular}
\end{table}

% Figure~\ref{fig:GPT-2 training} shows the validation loss for the two clipping methods. Since the loss starts at values around $10.82$, we plot only the steps from $500$ onward to make the differences more visible.

% \begin{figure}[h]
% \centering
% \includegraphics[width=0.5\textwidth]{GPT_val_loss.png}
% \caption{Performance of mini-batch gradient clipping (MB-Clipping) and standard gradient clipping for training a GPT-2 model with $124$M parameters.}
% \label{fig:GPT-2 training}
% \end{figure}
\section{Conclusion}
In this work, we propose per-sample gradient clipping as an alternative to standard clipping for improving training performance in the presence of heavy-tailed gradient noise. We show that this method provides strong convergence guarantees and support our theoretical findings with various numerical experiments. Additionally, we argue that when gradient accumulation is used, mini-batch clipping can serve as a more efficient alternative to per-sample clipping to improve convergence speed.

Although our theoretical and empirical results are promising, we would like to discuss some limitations of our analysis and outline possible directions for future work. First, we note that the bounded $p$-moment assumption on the gradient noise considered in this work is, in general, stronger than assuming a bounded \textit{centered} $p$-moment. In our setting, this assumption is required to control the bias introduced by per-sample clipping. It remains an open question whether similar results can be established under the weaker assumption of a bounded centered $p$-moment.

The definition of the clipping threshold used in this work requires knowledge of at least a lower bound on the noise parameter $p$ and an upper bound on the corresponding moment $\sigma^p$. While convergence of PS-Clip-SGD can still be established for arbitrary $\alpha > 0$ and $\beta \in (1,2]$, the resulting rates are not optimal. It remains unclear whether it is possible to derive optimal, parameter-free convergence rates, for example matching those obtained for Normalized SGD in \citep{hubler2024from}.

Our theoretical analysis requires a clipping threshold that increases with each sample. On the other hand, our experiments on real data suggest that a constant clipping threshold may yield better performance in practice; see also Appendix~\ref{app: Increasing vs constant clipping threshold}. This observation is consistent with the theory and practice of Clip-SGD, where increasing or very large clipping thresholds are often required in theory, whereas in practice small constant thresholds are commonly used. An interesting open question is whether in our setting optimal convergence rates can be established when using a constant, even if large, clipping threshold.

\newpage
\bibliography{literature}
\newpage
\appendix
\section{Known results used in the proofs}
\begin{lemma}[Vector Bernstein Inequality, see {\citep[Lemma~18]{kohler2017subsampled}}]\label{lem: vector berstein inequality}
Let $X_1, \ldots, X_n$ be independent vector-valued random variables with common dimension $d$, satisfying
\begin{equation*}
    \mathbb{E}[X_i] = 0 \;,
\quad
|X_i| \le c 
\quad \text{and} \quad 
\mathbb{E}\big[|X_i|^2\big] \le \sigma^2,
\end{equation*}
for all $i= 1,\ldots,n$ and some $c,\sigma>0$. Then, for every $\epsilon>0$ we have that
\begin{equation*}
    \mathbb{P}\left(\left|\frac{1}{n}\sum_{i=1}^nX_i\right| \ge \epsilon\right) 
\leq
\exp\left(\frac{-n\epsilon^2}{8\sigma^2+c\epsilon} + \frac{1}{4}\right).
\end{equation*}
In particular, this implies that for any $\delta \in (0,1)$
\begin{equation*}
    \mathbb{P}\left(\left|\frac{1}{n}\sum_{i=1}^nX_i\right| \geq \frac{c \log(1/\delta)}{n}+\sqrt{\frac{8\sigma^2\log(1/\delta)}{n}}\right)\leq \delta e^{1/4} \;.
\end{equation*}
\end{lemma}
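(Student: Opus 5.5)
The lemma has two parts. The first display is a known concentration result, and I would import it from \citep[Lemma~18]{kohler2017subsampled}, which traces back to Gross' vector Bernstein inequality. If a self-contained argument were wanted, I would follow that route. Write $S=\sum_{i=1}^n X_i$ and $V=n\sigma^2$; the steps are as follows.
\begin{itemize}
\item[(a)] By independence and $\E[X_i]=0$, the cross terms vanish, so $\E|S|\le(\E|S|^2)^{1/2}\le\sqrt{V}$.
\item[(b)] Consider the Doob martingale of $|S|$ with respect to the filtration generated by $X_1,\dots,X_i$. By the triangle inequality its increments are bounded by $2c$, and their conditional variances sum to at most a constant multiple of $V$.
\item[(c)] A scalar Bernstein/Freedman bound for this martingale controls the deviation of $|S|$ above $\E|S|\le\sqrt V$.
\item[(d)] Absorbing the shift by $\sqrt V$ (using $(t-\sqrt V)^2\ge t^2/2-V$) gives the additive constant $1/4$ in the exponent. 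This yields a sub-Gaussian tail for $t\lesssim V/c$ and a sub-exponential tail beyond, and both regimes are summarized by the denominator $8\sigma^2+c\epsilon$ after rescaling $t=n\epsilon$.
\end{itemize}
The main obstacle is tracking the constants $8$ and $1/4$ exactly through step (d); for this I would rely on the cited statement rather than rederive them.

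The ``in particular'' part I would prove directly from the first inequality. Set $\ell=\log(1/\delta)>0$, $a=c\ell/n$, $b=\sqrt{8\sigma^2\ell/n}$, and $\epsilon=a+b$. It suffices to show
\[
\frac{n\epsilon^2}{8\sigma^2+c\epsilon}\ge \ell ,
\]
since then the bound gives probability at most $\exp(-\ell+1/4)=\delta e^{1/4}$. Substitute $8\sigma^2=nb^2/\ell$ and $c=na/\ell$. The denominator then becomes
\[
8\sigma^2+c\epsilon=\frac{n}{\ell}\bigl(b^2+a^2+ab\bigr).
\]
The required inequality is therefore $(a+b)^2\ge a^2+ab+b^2$, i.e.\ $ab\ge 0$, which holds trivially.

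This second part is purely algebraic and poses no difficulty.
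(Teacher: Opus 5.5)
Your proposal matches the paper, which likewise imports the first display from \citep[Lemma~18]{kohler2017subsampled} without proof. Your derivation of the ``in particular'' form is also correct: with $\epsilon=a+b$ the exponent equals $\log(1/\delta)\,(a+b)^2/(a^2+ab+b^2)\ge\log(1/\delta)$, which is exactly the step the paper leaves implicit.

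One caution on your optional sketch (a)--(d). The Gross-type estimate that produces the constants $8$ and $1/4$ holds only for $t\le V/c$, so it does not give the $c\epsilon$ term with constant one in the denominator. That term is what makes the coefficient of $c\log(1/\delta)/n$ equal to $1$ in the second display. So that part rests on the citation, not on the sketch; a self-contained alternative is Pinelis' Hilbert-space Bernstein inequality combined with Chebyshev's inequality.
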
 
\section{Proof of in-expectation convergence}\label{Proof of in-expectation convergence}
The following Lemma provides a generic in-expectation error bound for the clipped mean estimator.
\begin{lemma}\label{lemm: generic convergence in espectation}
    Let $Y_1,\ldots, Y_n \sim Y$ be i.i.d. random variables such that $\E\left|Y\right|^{p}\leq \sigma^p<\infty$ for some $p\in (1,2]$. For any $\alpha\geq\sigma$ define 
    \begin{equation*}
        \gamma_k = \min\left\{1,\frac{\alpha k^{1/p}}{|Y_k|}\right\}\;.
    \end{equation*}
    Then,
    \begin{equation*}
    \E\left|\frac{1}{n}\sum_{k= 1}^nY_k\gamma_k-\E(Y)\right|^2 \leq 8\alpha^2n^{-\frac{2(p-1)}{p}}\;.
    \end{equation*}
\end{lemma}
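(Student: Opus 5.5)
We plan to use a bias--variance decomposition. Write $Z_k := \gamma_k Y_k$ and $\mu := \E(Y)$, so that
\begin{equation*}
\frac{1}{n}\sum_{k=1}^n Z_k - \mu = \frac{1}{n}\sum_{k=1}^n \bigl(Z_k - \E Z_k\bigr) + \frac{1}{n}\sum_{k=1}^n \bigl(\E Z_k - \mu\bigr).
\end{equation*}
Because the $Y_k$ are independent, the $Z_k - \E Z_k$ are independent and centered, so the cross terms vanish. This gives
\begin{equation*}
\E\Bigl|\frac{1}{n}\sum_{k} Z_k - \mu\Bigr|^2 = \frac{1}{n^2}\sum_k \E|Z_k - \E Z_k|^2 + \Bigl|\frac{1}{n}\sum_k (\E Z_k - \mu)\Bigr|^2 .
\end{equation*}
We will then bound each of the two terms by $4\alpha^2 n^{-2(p-1)/p}$.

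\emph{Variance term.} Write $\tau_k := \alpha k^{1/p}$. We have $|Z_k| = \min\{|Y_k|, \tau_k\}$, and hence $\E|Z_k - \E Z_k|^2 \le \E|Z_k|^2 \le \E\bigl[|Y|^p \tau_k^{2-p}\bigr] \le \sigma^p \alpha^{2-p} k^{(2-p)/p}$. Using $\sigma \le \alpha$, this is at most $\alpha^2 k^{(2-p)/p}$. Summing over $k$ and comparing with an integral gives $\sum_{k\le n} k^{(2-p)/p} \le n^{(2-p)/p+1} = n^{2/p}$. The variance term is therefore at most $\alpha^2 n^{2/p - 2} = \alpha^2 n^{-2(p-1)/p}$.

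\emph{Bias term.} We have $\mu - \E Z_k = \E\bigl[(1-\gamma_k)Y_k\bigr]$. Its norm is at most $\E\bigl[|Y|\1\{|Y|>\tau_k\}\bigr] \le \E|Y|^p/\tau_k^{p-1} \le \sigma^p \alpha^{1-p} k^{-(p-1)/p} \le \alpha k^{-(p-1)/p}$. Averaging, $\frac1n\sum_{k\le n} k^{-(p-1)/p} \le \frac1n\cdot p\, n^{1/p} \le 2 n^{-(p-1)/p}$, using $\int_0^n x^{-(p-1)/p}dx = p n^{1/p}$ and $p\le 2$. Squaring, the bias term is at most $4\alpha^2 n^{-2(p-1)/p}$.

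Adding the two bounds yields $5\alpha^2 n^{-2(p-1)/p} \le 8\alpha^2 n^{-2(p-1)/p}$. The only delicate step is the bias estimate. There the increasing thresholds $\alpha k^{1/p}$ are exactly what makes the averaged bias of order $n^{-(p-1)/p}$ rather than constant. We also need the sum comparisons to be done carefully so that the constants stay below $8$. The exponent $-(p-1)/p \in (-1,0]$ keeps $\sum k^{-(p-1)/p}$ dominated by the integral from $0$, and the same comparison applies to the variance sum.
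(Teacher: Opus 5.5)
Your proposal is correct and is essentially the paper's proof: the same split into a centered fluctuation term plus a deterministic bias term, and the same truncation estimates $\E|Z_k|^2\le\sigma^p\alpha^{2-p}k^{(2-p)/p}$ and $|\E Z_k-\mu|\le\sigma^p\alpha^{1-p}k^{(1-p)/p}$, combined via $\sigma\le\alpha$. Your summations are slightly tighter than the paper's, which is why you get the constant $5$ instead of $8$; note that the variance sum bound $\sum_{k\le n}k^{(2-p)/p}\le n^{2/p}$ follows from the trivial estimate $k^{(2-p)/p}\le n^{(2-p)/p}$ rather than from an integral comparison, since that summand is nondecreasing in $k$.
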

\begin{proof}
    We set, $\mu = \E(Y)$ and $\mu_k = \E(Y_k\gamma_k)$. Then,
    \begin{equation}\label{eq: expectation bias}
    \begin{aligned}
        |\mu_k-\mu| &= \left|\E\left(\left(\frac{\alpha k^{\frac{1}{p}}}{|Y_k|}-1\right)Y_k\1_{(|Y_k|>\alpha k^{1/p})}\right)\right|\leq \E\left(Y_k1_{(|Y_k|>\alpha k^{1/p})}\right)\\
        &\leq \E\left(|Y_k|^{p}\1_{(|Y_k|>\alpha k^{1/p})}\right)\alpha^{1-p} k^{\frac{1-p}{p}}\leq \sigma^p\alpha^{1-p} k^{\frac{1-p}{p}} \;.
    \end{aligned}
    \end{equation}
    Hence, we obtain that
    \begin{equation}\label{eq: bias term}
    \begin{aligned}
        \E\left|\frac{1}{n}\sum_{k= 1}^n\left(\mu_k-\mu\right)\right|^2 &\leq \frac{1}{n^2}\left(\sum_{k= 1}^n\sigma^p\alpha^{1-p} k^{\frac{1-p}{p}}\right)^2
         \leq \frac{\sigma^{2p}\alpha^{2(1-p)}}{n^2}\left(2n^{\frac{1}{p}}\right)^2\\
         &= 4\sigma^{2p}\alpha^{2(1-p)}n^{-\frac{2(p-1)}{p}}\leq 4 \alpha^2n^{-\frac{2(p-1)}{p}}\;.
    \end{aligned}
    \end{equation}
    Further, we have that
    \begin{equation}\label{eq: lln term}
    \begin{aligned}
        \E\left|\frac{1}{n}\sum_{k= 1}^n\left(Y_k\gamma_k-\mu_k\right)\right|^2 &=\frac{1}{n^2}\sum_{k= 1}^n\E\left|Y_k\gamma_k-\mu_k\right|^2 \leq \frac{1}{n^2}\sum_{k= 1}^n\E\left|Y_k\gamma_k\right|^2\\
        &\leq \frac{1}{n^2}\sum_{k= 1}^n\E\left|Y_k\right|^{p}\alpha^{2-p} k^\frac{2-p}{p} \leq \frac{4\sigma^p}{n^2}\alpha^{{2-p}}n^\frac{2}{p}\\
        &=4\sigma^{p}\alpha^{2-p}n^{-\frac{2(p-1)}{p}}\leq 4 \alpha^2n^{-\frac{2(p-1)}{p}}\;,
    \end{aligned}
    \end{equation}
    where in the second inequality we used the fact that $\left|Y_k\gamma_k\right|^2 = \left|Y_k\gamma_k\right|^p\left|Y_k\gamma_k\right|^{2-p}\leq \left|Y_k\gamma_k\right|^p\alpha^{2-p} k^\frac{2-p}{p}$ by definition of $\gamma_k$.
    Combining (\ref{eq: bias term}) and (\ref{eq: lln term}), we obtain that
    \begin{align*}
        \E\left|\frac{1}{n}\sum_{k= 1}^nY_k\gamma_k-\mu\right|^2 &\leq \E\left|\frac{1}{n}\sum_{k= 1}^n\left(Y_k\gamma_k-\mu_k\right)\right|^2 + \E\left|\frac{1}{n}\sum_{k= 1}^n\left(\mu_k-\mu\right)\right|^2\leq 8\alpha^2n^{-\frac{2(p-1)}{p}} \;,
    \end{align*}
    which concludes the proof.
    \end{proof}
Setting $\alpha = \sigma$ in the previous Lemma we obtain the following corollary.
\begin{corollary}\label{cor: generic convergence in espectation with known parameters}
    Let $Y_1,\ldots, Y_n \sim Y$ be i.i.d. random variables such that $\E\left|Y\right|^{p}\leq \sigma^p<\infty$ for some $p\in (1,2]$. Further, define 
    \begin{equation*}
        \gamma_k = \min\left\{1,\frac{\sigma k^{1/p}}{|Y_k|}\right\}\;.
    \end{equation*}Then,
    \[\E\left|\frac{1}{n}\sum_{k= 1}^nY_k\gamma_k-\E(Y)\right|^2 \leq 8\sigma^2 n^{-\frac{2(p-1)}{p}}\;.\]
\end{corollary}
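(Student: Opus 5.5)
This corollary is a direct specialization of Lemma~\ref{lemm: generic convergence in espectation}, so the plan is simply to invoke that lemma with the choice $\alpha = \sigma$. The hypothesis $\alpha \geq \sigma$ of the lemma is satisfied with equality. With this choice the clipping factors $\gamma_k = \min\{1, \sigma k^{1/p}/|Y_k|\}$ coincide exactly with those defined in the statement. The conclusion $8\alpha^2 n^{-2(p-1)/p}$ then becomes $8\sigma^2 n^{-2(p-1)/p}$, which is the claimed bound.

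For completeness, I would also check that nothing in the lemma's proof degenerates at the boundary case $\alpha = \sigma$. The only places where $\alpha \geq \sigma$ enters are the final comparisons in the bias estimate \eqref{eq: bias term} and the variance estimate \eqref{eq: lln term}. In \eqref{eq: bias term} one needs $\sigma^{2p}\alpha^{2(1-p)} \le \alpha^2$, which is equivalent to $(\sigma/\alpha)^{2p} \le 1$. In \eqref{eq: lln term} one needs $\sigma^p \alpha^{2-p} \le \alpha^2$, which is equivalent to $(\sigma/\alpha)^p \le 1$. Both hold with equality when $\alpha = \sigma$. Hence each of the two terms contributes exactly $4\sigma^2 n^{-2(p-1)/p}$, and combining them as in the lemma gives $8\sigma^2 n^{-2(p-1)/p}$.

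There is no genuine obstacle here. The only point requiring care is confirming that the lemma's hypotheses match: the $Y_k$ are i.i.d. copies of $Y$ with $\E|Y|^p \le \sigma^p$ and $p \in (1,2]$, and the thresholds have the same form $\alpha k^{1/p}$ with exponent $1/p$, matching $\beta = p$. In the later application to Theorem~\ref{thm: in expectation convergence of SGD}, one would take $Y = \nabla f(x_t,\xi)$ conditionally on $x_t$. Assumptions~\ref{ass:assumptions for standard SGD} supply exactly these moment conditions, with $\E(Y) = \nabla F(x_t)$.
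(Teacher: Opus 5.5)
Your proposal is correct and is exactly the paper's argument: the corollary follows by setting $\alpha=\sigma$ in Lemma~\ref{lemm: generic convergence in espectation}, whose hypothesis $\alpha\ge\sigma$ then holds with equality. Your additional check that $\sigma^{2p}\alpha^{2(1-p)}\le\alpha^2$ and $\sigma^p\alpha^{2-p}\le\alpha^2$ hold with equality at $\alpha=\sigma$ is a valid sanity check, though the paper does not need to spell it out.
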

\begin{proposition}\label{prop: standard bound on iterates}
    Under Assumptions \ref{ass:assumptions for standard SGD} and for any $\eta_t<1/L$, the iterates generated by PS-Clip-SGD satisfy
    \begin{align*}
            \sum_{m=1}^T\eta_t|\nabla F(x_t)|^2 &\leq 2\Delta_1+\sum_{m=1}^T\eta_t|\nabla F(x_t)-G(x_t,\alpha,\beta)|^2 \;.
        \end{align*}
\end{proposition}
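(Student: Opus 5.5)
The plan is the standard descent-lemma argument, applied deterministically to each realization of the iterates. We write $G_t := G(x_t,\alpha,\beta)$ and $e_t := G_t - \nabla F(x_t)$. By $L$-smoothness of $F$ (Assumptions~\ref{ass:assumptions for standard SGD}),
\begin{equation*}
F(x_{t+1}) \le F(x_t) - \eta_t\langle \nabla F(x_t), G_t\rangle + \frac{L\eta_t^2}{2}|G_t|^2 .
\end{equation*}
Since $\eta_t<1/L$, we have $\frac{L\eta_t^2}{2}\le \frac{\eta_t}{2}$, so
\begin{equation*}
F(x_{t+1}) \le F(x_t) - \eta_t\langle \nabla F(x_t), G_t\rangle + \frac{\eta_t}{2}|G_t|^2 .
\end{equation*}
Next we use the polarization identity
\begin{equation*}
-\langle a,b\rangle + \tfrac12|b|^2 = -\tfrac12|a|^2 + \tfrac12|a-b|^2
\end{equation*}
with $a=\nabla F(x_t)$ and $b=G_t$. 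This gives the one-step bound
\begin{equation*}
F(x_{t+1}) \le F(x_t) - \frac{\eta_t}{2}|\nabla F(x_t)|^2 + \frac{\eta_t}{2}|\nabla F(x_t)-G_t|^2 .
\end{equation*}
The convenient feature of this route is that no expectation and no unbiasedness of $G_t$ are needed. This matters, because the per-sample clipped estimator is biased. All the stochastic error is pushed into the term $|\nabla F(x_t)-G_t|^2$, which will later be controlled by Corollary~\ref{cor: generic convergence in espectation with known parameters} or by its high-probability analogue.

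Rearranging and summing over $t=1,\dots,T$ telescopes the function values:
\begin{equation*}
\frac12\sum_{t=1}^T \eta_t|\nabla F(x_t)|^2 \le F(x_1)-F(x_{T+1}) + \frac12\sum_{t=1}^T\eta_t|\nabla F(x_t)-G_t|^2 .
\end{equation*}
Using $F(x_{T+1})\ge F^*$, the first term on the right is at most $\Delta_1$. Multiplying through by $2$ then yields exactly the claimed inequality.

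There is no real obstacle here. The only point to be careful about is bounding the smoothness term by $\tfrac{\eta_t}{2}|G_t|^2$ rather than expanding $|G_t|^2$ separately. Doing it this way is what makes the polarization identity absorb the cross term exactly, with constant $2$ and with no extra factor coming from the step size condition. Indeed, the argument only uses $\eta_t\le 1/L$, so the strict inequality in the hypothesis is more than enough.
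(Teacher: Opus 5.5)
Your proof is correct and follows the paper's argument: the descent lemma from $L$-smoothness, the polarization identity to exchange the cross term for $|\nabla F(x_t)-G_t|^2$, discarding the $|G_t|^2$ contribution using $\eta_t<1/L$, and telescoping with $F(x_{T+1})\ge F^*$. The only cosmetic difference is the order of steps: you absorb $\frac{L\eta_t^2}{2}|G_t|^2\le\frac{\eta_t}{2}|G_t|^2$ before applying polarization, whereas the paper applies polarization first and then drops $-\frac{\eta_t}{2}(1-L\eta_t)|G_t|^2\le 0$.
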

\begin{proof}
    By $L$-smoothness of the function $F$, we have that for every $t = 1, \ldots, T-1$,
    \begin{align*}
        F(x_{t+1})-F(x_t) &\leq -\eta_t\langle\nabla F(x_t),G(x_t,\alpha,\beta)\rangle+\frac{L\eta_t^2}{2}|G(x_t,\alpha,\beta)|^2\\
        &=-\frac{\eta_t}{2}\left(|\nabla F(x_t)|^2+|G(x_t,\alpha,\beta)|^2-|\nabla F(x_t)-G(x_t,\alpha,\beta)|^2\right)+\frac{L\eta_t^2}{2}|G(x_t,\alpha,\beta)|^2\\
        &\leq -\frac{\eta_t}{2}\left(|\nabla F(x_t)|^2-|\nabla F(x_t)-G(x_t,\alpha,\beta)|^2\right) \;,
    \end{align*}
    where the equality in the second line is simply the polarization identity for inner products. Summing over $t$ and telescoping we obtain that
    \begin{align*}
            \sum_{m=1}^T\eta_t|\nabla F(x_t)|^2 &\leq 2\Delta_1+\sum_{m=1}^T\eta_t|\nabla F(x_t)-G(x_t,\alpha,\beta)|^2.
        \end{align*}
\end{proof}
We can now prove Theorem \ref{thm: in expectation convergence of SGD}.
\begin{proof}[Proof of Theorem \ref{thm: in expectation convergence of SGD}]
By proposition \ref{prop: standard bound on iterates} we know that 
\begin{align}\label{eq: telescoped sum}
            \sum_{m=1}^T\eta_t|\nabla F(x_t)|^2 &\leq 2\Delta_1+\sum_{m=1}^T\eta_t|\nabla F(x_t)-G(x_t,\sigma,p)|^2.
        \end{align}
        Further, by Corollary \ref{cor: generic convergence in espectation with known parameters} we have that $\E(|\nabla F(x)-G(x,\sigma,p)|^2)\leq 8\sigma^2 n^{-\frac{2(p-1)}{p}}$ for any fixed $x \in \R^d$. By assumption, for any $t = 1, \ldots,T$ the samples $\xi_t^{(1)},\ldots,\xi_t^{(n)}$ are independent of $x_t$. Hence, we obtain that
        \begin{equation}\label{eq: variance bound}
            \E(|\nabla F(x_t)-G(x_t,\sigma,p)|^2) = \E\left(\E\left(|\nabla F(x_t)-G(x_t,\sigma,p)|^2 \big| x_t\right)\right)\leq 8\sigma^2 n^{-\frac{2(p-1)}{p}}
        \end{equation}for any $t=1,\ldots,T$.
         Plugging (\ref{eq: variance bound}) into (\ref{eq: telescoped sum}), we obtain that
    \begin{align*}
        \sum_{t=1}^T\eta_t\E(|\nabla F(x_t)|^2) \leq 2\Delta_1 + \frac{8\sigma^2}{n^{\frac{2(p-1)}{p}}} \sum_{t=1}^T\eta_t\;,
    \end{align*}
and dividing both sides by $\sum_{t=1}^T \eta_t$ concludes the proof.
\end{proof}

% Note that if in Theorem \ref{thm: in expectation convergence of SGD} we simply require $\alpha \geq \sigma$ instead of $\alpha = \sigma$ we obtain the following result.

% \begin{lemma}\label{lem: general in expectation convergence of SGD}
% Under Assumptions \ref{ass:assumptions for standard SGD} and for any $\eta_t < 1/L$, the iterates generated by PS-Clip-SGD with $\alpha \geq \sigma$ and $\beta = p$ satisfy
% \begin{align*}
% \sum_{t=1}^T\frac{\eta_t\E(|\nabla F(x_t)|^2)}{{\sum_{t=1}^T\eta_t}} \leq \frac{2\Delta_1}{\sum_{t=1}^T\eta_t} +  8\sigma^{p}\alpha^{2-p}\left((\sigma\alpha^{-1})^{p}+1\right)n^{-\frac{2(p-1)}{p}}\;.
% \end{align*}
% \end{lemma}
% \begin{proof}
%     Follows by the same exact steps as the proof of Theorem \ref{thm: in expectation convergence of SGD} and using the upper bound in Lemma \ref{lemm: generic convergence in espectation} instead of the one in Corollary \ref{cor: generic convergence in espectation with known parameters}.
% \end{proof}

\section{Proof of high-probability convergence}\label{Proof of high-probability convergence}
Similarly to the proof of the in-expectation convergence result, we begin by proving a generic high-probability error bound for the clipped mean estimator.
\begin{lemma}\label{lem: genereal high prob bound}
    Let $Y_1, \ldots, Y_n \sim Y$ be i.i.d. random variables such that $\E\left|Y\right|^{p}\leq \sigma^p<\infty$ for some $p\in (1,2]$. For any $\delta\in(0,1)$ and $\alpha\geq \sigma$ set 
    \begin{equation*}
        \gamma_k = \min\left\{1,\frac{\alpha k^{1/p}}{\log(1/\delta)^{1/p}|Y_k|}\right\}\;.
    \end{equation*}
    Then, with probability at least $1-\delta e^{1/4}$ we have that
    \begin{equation*}
        \left|\frac{1}{n}\sum_{k=1}^n Y_k \gamma_k - \E(Y)\right|\leq 7\alpha\left(\frac{\log(1/\delta)}{n}\right)^{\frac{p-1}{p}}\;.
    \end{equation*}
\end{lemma}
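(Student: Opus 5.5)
Write $\lambda:=\log(1/\delta)$ and $\tau_k:=\alpha (k/\lambda)^{1/p}$, so that $\gamma_k=\min\{1,\tau_k/|Y_k|\}$. Let $\mu=\E(Y)$ and $\mu_k=\E(Y_k\gamma_k)$. We follow the bias--variance split used in the proof of Lemma~\ref{lemm: generic convergence in espectation}:
\begin{equation*}
\left|\frac1n\sum_{k=1}^n Y_k\gamma_k-\mu\right|\le \left|\frac1n\sum_{k=1}^n (Y_k\gamma_k-\mu_k)\right| + \frac1n\sum_{k=1}^n|\mu_k-\mu| .
\end{equation*}
The second term is deterministic. The first term is a sum of independent, centred, bounded vectors, and we control it with the Vector Bernstein Inequality, Lemma~\ref{lem: vector berstein inequality}.

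\emph{Bias term.} Repeating \eqref{eq: expectation bias} with threshold $\tau_k$ gives
\begin{equation*}
|\mu_k-\mu|\le \sigma^p\tau_k^{1-p}=\sigma^p\alpha^{1-p}\lambda^{\frac{p-1}{p}}k^{\frac{1-p}{p}}\le \alpha\lambda^{\frac{p-1}{p}}k^{\frac{1-p}{p}},
\end{equation*}
where the last step uses $\alpha\ge\sigma$. Since $\sum_{k\le n}k^{(1-p)/p}\le p\,n^{1/p}\le 2n^{1/p}$, the bias contributes at most $2\alpha(\lambda/n)^{(p-1)/p}$.

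\emph{Fluctuation term.} Set $X_k=Y_k\gamma_k-\mu_k$. These are independent with mean zero. They are not identically distributed, so we take uniform constants over $k\le n$. Since $|Y_k\gamma_k|\le\tau_k\le\tau_n$, we have $|X_k|\le 2\tau_n=:c$. For the second moment, as in \eqref{eq: lln term},
\begin{equation*}
\E|X_k|^2\le \E|Y_k\gamma_k|^2\le \sigma^p\tau_k^{2-p}\le \sigma^p\tau_n^{2-p}=:s^2 .
\end{equation*}
Applying the second form of Lemma~\ref{lem: vector berstein inequality} gives, with probability at least $1-\delta e^{1/4}$,
\begin{equation*}
\left|\frac1n\sum_k X_k\right|\le \frac{c\lambda}{n}+\sqrt{\frac{8s^2\lambda}{n}} .
\end{equation*}
Now substitute $\tau_n=\alpha(n/\lambda)^{1/p}$. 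The first piece is $c\lambda/n=2\alpha(\lambda/n)^{1-1/p}$. For the second piece,
\begin{equation*}
\frac{s^2\lambda}{n}\le \alpha^2 (n/\lambda)^{\frac{2-p}{p}}\,\lambda/n=\alpha^2(\lambda/n)^{\frac{2(p-1)}{p}},
\end{equation*}
again using $\sigma\le\alpha$, so $\sqrt{8s^2\lambda/n}\le\sqrt8\,\alpha(\lambda/n)^{(p-1)/p}$. Adding the bias bound, the total is at most $(2+2+\sqrt8)\alpha(\lambda/n)^{(p-1)/p}\le 7\alpha(\lambda/n)^{(p-1)/p}$, which is the claim.

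The main obstacle is choosing constants in Bernstein's inequality so that both the range term and the variance term scale like $(\lambda/n)^{(p-1)/p}$. This works only because the threshold is scaled by $\lambda^{-1/p}$: the $c\lambda/n$ term then loses a power of $\lambda$ that exactly matches the variance term. A minor care point is that the lemma needs a single $c$ and $\sigma$ for all $k$; bounding by the largest index $n$ handles this and costs no rate.
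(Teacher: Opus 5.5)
Your proposal is correct and follows the paper's proof essentially step for step. It uses the same bias--fluctuation split and the same uniform range bound $2\alpha(n/\log(1/\delta))^{1/p}$ and variance bound $\sigma^p\alpha^{2-p}(n/\log(1/\delta))^{(2-p)/p}$ in the second form of the vector Bernstein inequality. It also uses the same rescaled bias estimate from \eqref{eq: expectation bias}, which sums to $2\alpha(\log(1/\delta)/n)^{(p-1)/p}$; your remarks on uniform constants over $k\le n$ and on $\sum_{k\le n} k^{(1-p)/p}\le p\,n^{1/p}$ simply make explicit what the paper uses implicitly.
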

\begin{proof}
The proof of this result follows the same idea as the one of \citep[Lemma~1]{bandits}.
First, note that for every $k=1, \ldots,n$ we have that 
\begin{align*}
    &\mathbb{E}\left(Y_k\gamma_k-\E(Y_k\gamma_k)\right) = 0 \;,\\
&|Y_k\gamma_k-\E(Y_k\gamma_k)| \leq 2 \alpha\left(\frac{n}{\log(1/\delta)} \right)^{\frac{1}{p}}
\quad \text{and} \\ 
&\mathbb{E}\big[|Y_k \gamma_k-\E(Y_k\gamma_k)|^2\big] \leq \sigma^{p}\alpha^{2-p}\left(\frac{n}{\log(1/\delta)}\right)^\frac{2-p}{p}\;.
\end{align*}
Hence, using the second inequality in Lemma~\ref{lem: vector berstein inequality} we obtain that with probability $1-\delta e^{1/4}$
\begin{equation}\label{eq: generic high prob bound in proof}
    \begin{aligned}
\left|\frac{1}{n}\sum_{k=1}^n Y_k\gamma_k - \E(Y_k\gamma_k)\right|
&\leq
2 \alpha\left(\frac{n}{\log(1/\delta)} \right)^{\frac{1}{p}}\frac{ \log(1/\delta)}{n}+\sqrt{8\sigma^{p}\alpha^{2-p}\left(\frac{n}{\log(1/\delta)}\right)^\frac{2-p}{p}\frac{\log(1/\delta)}{n}}\\
&=\alpha\left(2+\sqrt{8}(\sigma\alpha^{-1})^{p/2}\right)\left(\frac{\log(1/\delta)}{n}\right)^{\frac{p-1}{p}}
<5\alpha\left(\frac{\log(1/\delta)}{n}\right)^{\frac{p-1}{p}}
\end{aligned}
\end{equation}
Combining this, with (\ref{eq: expectation bias}) we obtain that with probability $1-\delta e^{1/4}$
\begin{align*}
\left|\E(Y) 
- \frac{1}{n}\sum_{k=1}^n Y_k \gamma_k\right|
&\leq
\left|\frac{1}{n}\sum_{k=1}^n 
\Big(\E(Y_k) - \E(Y_k \gamma_k)\Big)\right|
+\left|
\frac{1}{n}\sum_{k=1}^n 
\Big(\E(Y_k\gamma_k)
- Y_k \gamma_k \Big)\right|
\\
&\leq
\frac{1}{n}\sum_{k=1}^n 
\sigma^p\alpha^{1-p}
\log(1/\delta)^{\frac{p-1}{p}} k^{\frac{1-p}{p}}
+5\alpha\left(\frac{\log(1/\delta)}{n}\right)^{\frac{p-1}{p}}
\\
&\leq
\frac{2}{n} 
\sigma^p\alpha^{1-p}\log(1/\delta)^{\frac{p-1}{p}} n^{\frac{1}{p}}
+5\alpha\left(\frac{\log(1/\delta)}{n}\right)^{\frac{p-1}{p}}
\\
&\leq
2\sigma^p\alpha^{1-p}\log(1/\delta)^{\frac{p-1}{p}} n^{\frac{1-p}{p}}
+5\alpha\left(\frac{\log(1/\delta)}{n}\right)^{\frac{p-1}{p}}
\\
&\leq 7\alpha\left(\frac{\log(1/\delta)}{n}\right)^{\frac{p-1}{p}}
\end{align*}
where in the second inequality we used (\ref{eq: expectation bias}) to estimate the first term and (\ref{eq: generic high prob bound in proof}) for
the second one.
\end{proof}
% \begin{corollary}
%     Let $\xi^{(1)},\ldots,\xi^{(n)} \sim \mathcal{D}$ be i.i.d samples, $\delta \in (0,1)$ and define
%     \begin{equation*}
%         \gamma_k = \min\left\{1,\frac{\sigma k^{1/p}}{\log(1/\delta)^{1/p}|\nabla(x,\xi^{(k)})|}\right\}\;.
%     \end{equation*}
%     Then, for any $x\in \R^d$ we have under Assumptions
%     \ref{ass:assumptions for standard SGD} that 
    
% \end{corollary}

% \begin{corollary}\label{cor: union bound}
%     Let $x_1,\ldots,x_T \in \R^d$ and $\delta \in (0,1)$. Then under the same assumptions of Lemma \ref{lem: genereal high prob bound} we have that
%     \begin{equation}
%         P\left(\max_{t=1,\ldots,T}\left|\frac{1}{n}\sum_{k=1}^n Y_k \gamma_k - \E(Y\gamma_k)\right|\leq 5\sigma\left(\frac{\log(T/\delta)}{n}\right)^{\frac{p-1}{p}}\right)\leq 1-\delta e^{1/4}
%     \end{equation}
% \end{corollary}
Setting $\alpha = \sigma$ in the previous Lemma we obtain the following corollary.

\begin{corollary}\label{cor: genereal high prob bound}
    Let $Y_1, \ldots, Y_n \sim Y$ be i.i.d. random variables such that $\E\left|Y\right|^{p}\leq \sigma^p<\infty$ for some $p\in (1,2]$, and for any $\delta\in(0,1)$  set 
    \begin{equation*}
        \gamma_k = \min\left\{1,\frac{\sigma k^{1/p}}{\log(1/\delta)^{1/p}|Y_k|}\right\}\;.
    \end{equation*}
    Then, with probability at least $1-\delta e^{1/4}$ we have that
    \begin{equation*}
        \left|\frac{1}{n}\sum_{k=1}^n Y_k \gamma_k - \E(Y)\right|\leq7\sigma\left(\frac{\log(1/\delta)}{n}\right)^{\frac{p-1}{p}}\;.
    \end{equation*}
\end{corollary}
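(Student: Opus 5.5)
This is Lemma~\ref{lem: genereal high prob bound} with $\alpha=\sigma$, which satisfies the hypothesis $\alpha\geq\sigma$. I would therefore prove it by direct specialization rather than by a new argument.

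First, I check that the clipping factors match. With $\alpha=\sigma$, the factor in Lemma~\ref{lem: genereal high prob bound} becomes
\[
\min\left\{1,\frac{\sigma k^{1/p}}{\log(1/\delta)^{1/p}|Y_k|}\right\},
\]
which is exactly the $\gamma_k$ in the corollary. The moment assumption $\E|Y|^p\leq\sigma^p$ and the i.i.d. structure are the same. So the event from the lemma, which has probability at least $1-\delta e^{1/4}$, carries over unchanged.

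Second, I substitute $\alpha=\sigma$ into the conclusion. The bound $7\alpha\bigl(\log(1/\delta)/n\bigr)^{\frac{p-1}{p}}$ becomes $7\sigma\bigl(\log(1/\delta)/n\bigr)^{\frac{p-1}{p}}$, which is the claimed inequality.

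For a self-contained check, the constant $7$ can also be recovered from the two ingredients of the lemma's proof. The concentration term, via Lemma~\ref{lem: vector berstein inequality}, has constant $2+\sqrt{8}(\sigma/\alpha)^{p/2}=2+\sqrt 8<5$. The bias bound from (\ref{eq: expectation bias}), summed using $\sum_{k\le n}k^{(1-p)/p}\le 2n^{1/p}$, gives the constant $2(\sigma/\alpha)^p=2$. Together these give $7$.

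I do not expect a real obstacle; the only care needed is confirming that the parameterization of $\gamma_k$ agrees, which it does.
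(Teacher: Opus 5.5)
Your proposal is correct and matches the paper exactly: the paper obtains this corollary by setting $\alpha=\sigma$ in Lemma~\ref{lem: genereal high prob bound}, which satisfies $\alpha\geq\sigma$ and reproduces the stated $\gamma_k$ and bound. Your recomputation of the constant also agrees with the lemma's proof: $2+\sqrt{8}<5$ from the Bernstein term plus $2$ from the bias term gives $7$.
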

We can now prove Theorem \ref{thm: high prob covergence of SGD}.
\begin{proof}[Proof of Theorem \ref{thm: high prob covergence of SGD}]
By Proposition \ref{prop: standard bound on iterates} we have that
    \begin{align}\label{eq: second telescoped sum}
            \sum_{m=1}^T\eta_t|\nabla F(x_t)|^2 &\leq 2\Delta_1+\sum_{m=1}^T\eta_t|\nabla F(x_t)-G(x_t,\alpha,p)|^2.
        \end{align}
Further,we know from Corollary \ref{cor: genereal high prob bound} that for every $x_t \in \R^d$,
\begin{equation*}
    |\nabla F(x)-G(x,\alpha,p)|^2\leq  49\sigma^2\left(\frac{\log(1/\delta)+1/4}{n}\right)^{\frac{2(p-1)}{p}}
\end{equation*}
with probability $1-\delta$. Hence, applying union bound, it follows immediately that with probability at least $1-\delta$,
\begin{equation}\label{eq: uniform error bound}
    \max_{t=1,\ldots,T}|\nabla F(x_t)-G(x_t,\alpha,p)|^2\leq  49\sigma^2\left(\frac{\log(T/\delta)+1/4}{n}\right)^{\frac{2(p-1)}{p}}\;.
\end{equation}
Plugging (\ref{eq: uniform error bound}) into (\ref{eq: second telescoped sum}) we obtain that with probability at least $1-\delta$,
\begin{align*}
            \sum_{m=1}^T\eta_t|\nabla F(x_t)|^2 &\leq 2\Delta_1+49\sigma^2\left(\frac{\log(T/\delta)+1/4}{n}\right)^{\frac{2(p-1)}{p}}\sum_{m=1}^T\eta_t
        \end{align*}
and the result follows by dividing both sides by $\sum_{t=1}^T \eta_t$.
\end{proof}

\section{Details of Experiments}
All experiments were conducted on an NVIDIA A100-SXM4-40GB GPU in an internal cluster.
\subsection{Quadratic function with noise}\label{app: Quadratic function with noise}
The following table provides additional details on the experiment presented in Figure \ref{fig:untuned convergence}.

\begin{table}[h]
  \caption{Performance comparison of Clip-SGD, PS-Clip-SGD and Normalized SGD in minimizing the function $f(x,\xi)=\frac{1}{2}|x|^2+\langle x,\xi\rangle$ with untuned hyperparameters.}
  \label{tab: performance comparison quadratic prob untuned params}
  \centering
  \begin{tabular}{clcc}
    \toprule
    Noise parameter    & Algorithm     & Minimal Gradient Norm & Average Gradient Norm \\
    \midrule
    \multirow{3}{*}{$p = 1.8$} 
        & Clip-SGD         & $0.0758$ & $0.4155$ \\
        & PS-Clip-SGD      & $\mathbf{0.0707}$ & $\mathbf{0.2269}$ \\
        & Normalized SGD   & $0.0763$ & $0.4156$ \\
    \midrule
    \multirow{3}{*}{$p = 1.5$} 
        & Clip-SGD         & $0.0940$ & $0.5661$ \\
        & PS-Clip-SGD      & $\mathbf{0.0937}$ & $\mathbf{0.2666}$ \\
        & Normalized SGD   & $0.0943$ & $0.5662$ \\
    \midrule
    \multirow{3}{*}{$p = 1.2$} 
        & Clip-SGD         & $0.2469$ & $1.0709$ \\
        & PS-Clip-SGD      & $\mathbf{0.1551}$ & $\mathbf{0.3285}$ \\
        & Normalized SGD   & $0.2469$ & $1.0709$ \\
    \bottomrule
  \end{tabular}
\end{table}

We also evaluated the performance of Clip-SGD, PS-Clip-SGD, and Normalized SGD after extensive hyperparameter tuning. Table \ref{tab: quadratic func tuned params} summarizes the optimal parameters for each algorithm, where $\eta$ denotes the step size, $\gamma$ is the clipping threshold for Clip-SGD, and $\alpha$ and $\beta$ are defined as in (\ref{eq: definition of estimator}). Note that for PS-Clip-SGD we only tuned $\alpha$, while setting $\beta = p$ as suggested by our theoretical results.

Figure~\ref{fig:tuned convergence} shows the average performance of the three algorithms over 10 runs using these tuned parameters. As in the untuned setting, we observe that PS-Clip-SGD outperforms both other methods. Moreover, consistent with the trends observed in Figure~\ref{fig:tuned convergence}, heavier-tailed noise adversely affects the convergence of Normalized SGD and Clip-SGD, whereas PS-Clip-SGD remains largely unaffected.

This behavior is further confirmed by the high-probability convergence experiment shown in Figure~\ref{fig: tuned high prob convergence}. As in the untuned setting, we run the algorithm $10^4$ times for $T = 100$ iterations and, for each run, compute the average gradient norm over the $T$ iterations. Figure~\ref{fig: tuned high prob convergence} shows the $(1 - \delta)$-quantiles of the average gradient norm plotted against $\log(1/\delta)$. As in the untuned case, PS-Clip-SGD clearly outperforms both Normalized SGD and Clip-SGD, confirming the improved dependence on $\log(1/\delta)$ in the upper bound for PS-Clip-SGD compared to the other methods, as discussed in Section~\ref{sec: high prob convergence}.

Comparing the results in this section with Figure \ref{fig: high prob convergence}, is also interesting to observe that, while Normalized SGD and Clip-SGD appear highly sensitive to the choice of hyperparameters, PS-Clip-SGD performs well with both tuned and untuned hyperparameters. This is again consistent with our theoretical analysis, which suggested that PS-Clip-SGD should be more robust to the choice of hyperparameters than the other two methods.
\begin{table}[!htbp]
  \caption{Tuned Hyperparameters for Clip-SGD, PS-Clip-SGD and Normalized SGD for minimizing the function $f(x,\xi)=\frac{1}{2}|x|^2+\langle x,\xi\rangle$.}
  \label{tab: quadratic func tuned params}
  \centering
  \begin{tabular}{clcccc}
    \toprule
    Noise parameter&Algorithm& $\eta$&$\gamma$ & $\alpha$ & $\beta$\\
    \midrule
    \multirow{3}{*}{$p = 1.8$} 
        & Clip-SGD         & $0.5$ & $0.1$& - & - \\
        & PS-Clip-SGD      & $0.05$ & - &1.0 & 1.8\\
        & Normalized SGD   & $0.05$ & - & - &-  \\
    \midrule
    \multirow{3}{*}{$p = 1.5$} 
        & Clip-SGD         & $0.05$ & $0.6$& - & - \\
        & PS-Clip-SGD      & $0.05$ & - &1.0 & 1.5\\
        & Normalized SGD   & $0.05$ & - & - &-  \\
    \midrule
    \multirow{3}{*}{$p = 1.2$} 
        & Clip-SGD         & $0.4$ & $0.1$& - & - \\
        & PS-Clip-SGD      & $0.05$ & - &1.0 & 1.2\\
        & Normalized SGD   & $0.05$ & - & - &-  \\
    \bottomrule
  \end{tabular}
\end{table}
\begin{figure}[!htbp]
    \centering
    \includegraphics[width=0.32\textwidth]{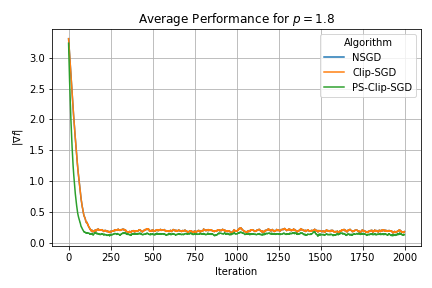}
    \hfill
    \includegraphics[width=0.32\textwidth]{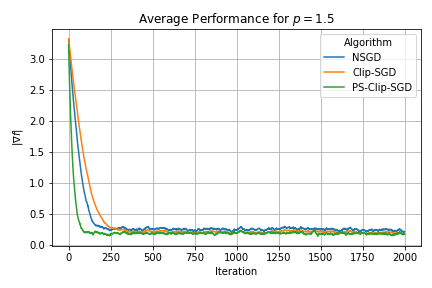}
    \hfill
    \includegraphics[width=0.32\textwidth]{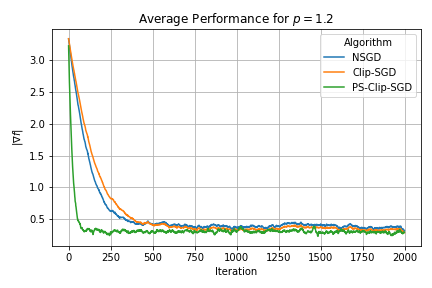}
    \caption{Performance of Normalized-SGD, Clip-SGD and PS-Clip-SGD for different noise regimes using the parameters in Table \ref{tab: quadratic func tuned params}. In the first plot the graphs of Normalized SGD and Clip-SGD appear indistinguishable.}
    \label{fig:tuned convergence}
\end{figure}
\begin{figure}[h]
    \centering
    \includegraphics[width=0.32\textwidth]{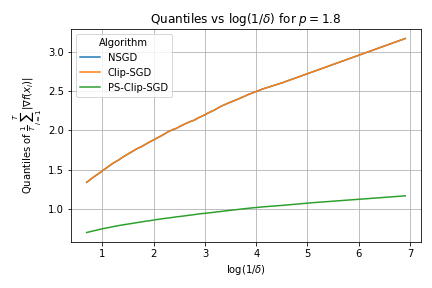}
    \hfill
    \includegraphics[width=0.32\textwidth]{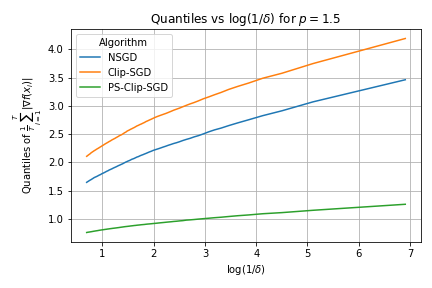}
    \hfill
    \includegraphics[width=0.32\textwidth]{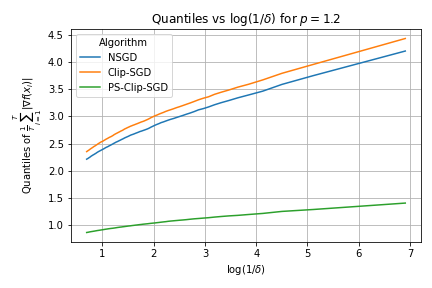}
    \caption{$(1-\delta)$-quantile of the average gradient norm after $T = 100$ training steps, plotted against $\log(1/\delta)$ for the three algorithms and different noise regimes. The experiment is performed using the tuned hyperparameters from Table \ref{tab: quadratic func tuned params}. In the first plot the graphs of Normalized SGD and Clip-SGD appear indistinguishable.}
    \label{fig: tuned high prob convergence}
\end{figure}
\newpage
\subsection{Additional details and experiments on training AlexNet with per-sample clipping}\label{app: alexnet}
\subsubsection{Model and Hyperparameters}
For the experiment presented in Section \ref{sec: Training AlexNet with per-sample clipping}, we used the AlexNet implementation from \texttt{torchvision.models} \footnote{\url{https://docs.pytorch.org/vision/main/models/generated/torchvision.models.alexnet.html}}. To make the network more suitable for the CIFAR-100 image size we replaced the first convolutional layer, \texttt{nn.Conv2d(3, 64, kernel\_size=11, stride=4, padding=2)}, with \texttt{nn.Conv2d(3, 64, kernel\_size=3, stride=1, padding=1)}. Additionally, we modified all \texttt{nn.MaxPool2d(kernel\_size=3, stride=2)} layers to \texttt{nn.MaxPool2d(kernel\_size=2, stride=2)}. We also replaced \texttt{nn.ReLU(inplace=True)} with \texttt{nn.ReLU(inplace=False)} to make the model compatible with Opacus. The rest of the network was left unchanged.

We randomly split the training portion of the CIFAR-100 dataset into 45,000 training images and 5,000 validation images. The training images were augmented using PyTorch's \texttt{AutoAugment} transform \footnote{\url{https://docs.pytorch.org/vision/main/generated/torchvision.transforms.AutoAugment.html\#torchvision.transforms.AutoAugment}} with the CIFAR-10 policy.  The momentum and weight decay parameters were set to $0.9$ and $10^{-4}$, respectively. The learning rate and clipping threshold were tuned over the grids $(0.1, 0.01, 0.001)$ and $(1.0, 15.0, 30.0, 45.0)$, respectively. Since the performance of Clip-SGD with parameter pairs $(0.01, 15)$ and $(0.01, 30)$ was similar during the tuning phase, we conducted full training runs with both configurations. The pair $(0.01, 15)$ performed slightly better, achieving a final validation accuracy of $58.94\%$, compared to $58.76\%$ for $(0.01, 30)$.

For the same reason, we also conducted full training runs for PS-Clip-SGD with both parameter pairs $(0.01, 30.0)$ and $(0.01, 45.0)$. The latter achieved a validation accuracy of $64.84\%$, while the pair $(0.01, 30.0)$ achieved $63.50\%$. Comparing these values with Table~\ref{tab: AlexNet training}, we note that PS-Clip-SGD still significantly outperforms both vanilla SGD and Clip-SGD even with the suboptimal choice of parameters $(0.01, 30.0)$.

\subsubsection{Increasing vs constant clipping threshold}\label{app: Increasing vs constant clipping threshold}

Since our theoretical analysis relies on a clipping threshold that increases with each sample, we also evaluated this strategy for training AlexNet on CIFAR-100. Our experiments show that, while this approach also outperforms both vanilla SGD and standard Clip-SGD, a constant clipping threshold may yield better performance in practice.

Recall that the clipping threshold for the $k$-th sample in a batch is defined in (\ref{eq: definition of estimator}) as $\alpha k^{1/\beta}$. We tested the parameters $\alpha$ and $\beta$ over the grids $(1.0, 15.0, 30.0,45.0)$ and $(1.2, 1.5, 1.8, 2.0)$, respectively, and compared the validation accuracy after $40$ epochs. As in our main experiment, we used a learning rate of $0.01$, momentum $0.9$, and weight decay with parameter $10^{-4}$.

Table~\ref{tab: performance after 40 epochs} summarizes the performance of the three best parameter pairs, namely $(15.0, 2.0)$, $(15.0, 1.8)$, and $(15.0, 1.5)$. We also report the performance of vanilla SGD, Clip-SGD, and PS-Clip-SGD with a constant clipping threshold, using their respective optimized hyperparameters. To distinguish between the two variants, in the table we refer to per-sample clipped SGD with an increasing clipping threshold as \emph{Increasing-PS-Clip-SGD-mom}, and to the version with a constant threshold as \emph{PS-Clip-SGD-mom}.

We observe that per-sample clipping with an increasing threshold outperforms both vanilla SGD and Clip-SGD for all three values of $\beta$. However, it still performs slightly worse than PS-Clip-SGD with a constant clipping threshold. Moreover, the performance of Increasing-PS-Clip-SGD improves as $\beta$ increases. This suggests that a constant per-sample clipping threshold may generally be preferable in practice, since larger values of $\beta$ imply a slower increase of the clipping threshold with $k$, with the limiting case ``$\beta = \infty$'' corresponding to a constant threshold.

\begin{table}[!htbp]
  \caption{Performance of SGD, Clip-SGD, PS-Clip-SGD, and Increasing-PS-Clip-SGD when training AlexNet on CIFAR-100 for $40$ epochs. All four algorithms use learning rate of $0.01$, momentum $0.9$, and weight decay with parameter $10^{-4}$.}
  \label{tab: performance after 40 epochs}
  \centering
  \begin{tabular}{clcccc}
    \toprule
    Learning rate &Algorithm&$\gamma$ & $\alpha$ & $\beta$ & Val loss after 40 epochs\\
    \midrule
    \multirow{6}{*}{$0.01$} 
        & SGD-mom & - & - & $-$ & $57.40 \%$\\
        & Clip-SGD-mom         & $15.0$ & - & - & $57.82\%$ \\
        & PS-Clip-SGD-mom      & $45.0$ & - &-  & $\mathbf{60.20\%}$\\
        & Increasing-PS-Clip-SGD-mom   & - & $15.0$ & $2.0$ &$59.42\%$ \\
        &Increasing-PS-Clip-SGD-mom   & - & $15.0$ & $1.8$ &$58.90\%$\\
        &Increasing-PS-Clip-SGD-mom  & - & $15.0$ & $1.5$ &$58.48\%$\\
    \midrule
    \bottomrule
  \end{tabular}
\end{table}

\subsubsection{Possible reason for improved performance}\label{sec: Possible reason for improved performance}
\begin{figure}[h]
    \centering
    \includegraphics[width=0.6\textwidth]{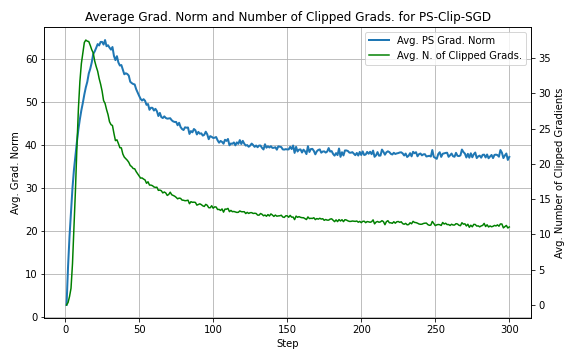}
    \hfill
    \caption{Blue line, left $y$-axis: Average per-sample gradient norm in each epoch: $\frac{1}{n_{batches}}\sum_{t=1}^{n_{batches}}\frac{1}{\textit{batch\_size}}\sum_{i=1}^{\textit{batch\_size}} |\nabla f(x_t,\xi_t^{(i)})|$. Green line, right $y$-axis: average number of clipped gradients in a batch for PS-Clip-SGD: $\frac{1}{n_{batches}}\sum_{t=1}^{n_{batches}}\frac{1}{\textit{batch\_size}}\sum_{i=1}^{\textit{batch\_size}} n_{clipped}^{(t,i)}$.\\
    Note: $n_{batches}$ indicates the total number of batches in an epoch, while $n_{clipped}^{(t,i)}$ indicates the number of clipped gradients in batch $i$ of epoch $t$.}
    \label{fig: AlexNet Gnorms}
\end{figure}
In this section, we provide a possible explanation for the improved performance of PS-Clip-SGD compared to vanilla SGD and Clip-SGD in the experiment presented in Section~\ref{sec: Training AlexNet with per-sample clipping}.

Figure~\ref{fig: AlexNet Gnorms} shows the average per-sample gradient norm and the average number of clipped gradients when training AlexNet with PS-Clip-SGD using a clipping threshold of $45.0$. We observe that the average number of clipped gradients per batch quickly decreases to values below $20$ and stabilizes around $11$. Recall that, in this experiment, we used a batch size of $64$. On the other hand, the average per-sample gradient norm remains consistently below $45.0$. This observation suggests the presence of a small number of outliers with very large gradients, which are effectively clipped by the algorithm, resulting in less noisy gradients and more stable parameter updates.

This also helps explain why standard (global) clipping is less effective in this setting than per-sample clipping. If, for instance, the norm of the batch gradient is consistently below $25.0$, then a global clipping threshold of $45.0$ or $30.0$ would have little effect. Moreover, the norm of the batch gradient,
$
\frac{1}{\textit{batch\_size}} \left| \sum_{i=1}^{\textit{batch\_size}} \nabla f(x_t, \xi_t^{(i)}) \right|,
$
can in general be significantly smaller than the mean per-sample gradient norm, $\frac{1}{\textit{batch\_size}} \sum_{i=1}^{\textit{batch\_size}} \left| \nabla f(x_t, \xi_t^{(i)}) \right|$.
This is precisely what we observed when training AlexNet with Clip-SGD and a clipping threshold of $15.0$: in this case, only about one gradient per epoch was clipped, so Clip-SGD behaved very similarly to vanilla SGD, as noted in Section~\ref{sec: Training AlexNet with per-sample clipping}. 
On the other hand, while an even lower clipping thresholds may prevent extreme movements in the parameter updates by controlling the gradient norm, the direction of the clipped gradient would still be noisy.

\subsection{Training GPT-2}\label{app:Training GPT-2}

For this experiment, we used the GPT-2 implementation from \citep{karpathy_nanogpt_2022}. We also used the API provided in the repository to download, split, and preprocess the OpenWebText dataset \citep{Gokaslan2019OpenWeb}. The code in \citep{karpathy_nanogpt_2022} was released under the MIT License, and the OpenWebText dataset was released under the CC0 license.

For the choice of hyperparameters, we followed those used in \citep{GPT3-paper} for the 124M-parameter model. In addition to the learning rate of $0.6 \cdot 10^{-4}$ used therein, we also tested maximum learning rates of $0.8 \cdot 10^{-4}$ and $10^{-3}$. The model was trained for 5000 training steps using the AdamW optimizer with betas $(0.9, 0.95)$, $\epsilon = 10^{-8}$, and weight decay of $0.1$. The learning rate was increased linearly during the first 500 training steps, after which we applied cosine learning rate decay down to a minimum value of $10\%$ of the maximum learning rate. We use gradient accumulation with 64 accumulation steps and a mini-batch size of 8, which corresponds to approximately $5 \cdot 10^{5}$ tokens per batch. Gradient clipping was applied with a threshold of 1.0 for both methods.

Table~\ref{tab: GPT-Training times} reports the total training time for each run. The additional time required by MB-Clip-SGD can be explained by the fact that the experiment was performed on a single GPU. After each accumulation step, the computed gradient is clipped and added to the previously accumulated gradients stored in a separate dictionary. \texttt{optimizer.zero\_grad()} is then called to reset the network gradients before the next accumulation step. After all accumulation steps are completed, the accumulated gradients are copied from the dictionary into the network before performing the optimization step. If gradient accumulation were parallelized across multiple GPUs, we expect the total computational time to be nearly identical for MB-Clip-SGD and Clip-SGD. 

\begin{table}[h]
  \caption{Total training time for the experiment presented in Section \ref{sec: Training GPT-2 with mini-batch clipping}}
  \label{tab: GPT-Training times}
  \centering
  \begin{tabular}{clcc}
    \toprule
    Learning Rate    & Algorithm  & Total Training Time (min) & \\
    \midrule
    \multirow{2}{*}{$6\cdot 10^{-4}$} 
        & Clip-SGD         & $1454.73$ \\
        & MB-Clip-SGD      & ${1473.83}$ \\
    \midrule
    \multirow{2}{*}{$8\cdot 10^{-4}$} 
        & Clip-SGD         & $1455.12$  \\
        & MB-Clip-SGD      & ${1473.96}$ \\
    \midrule
    \multirow{2}{*}{$1\cdot 10^{-3}$} 
        & Clip-SGD         & $1455.23$  \\
        & MB-Clip-SGD      & ${1473.56}$  \\
    \bottomrule
  \end{tabular}
\end{table}

%%%%%%%%%%%%%%%%%%%%%%%%%%%%%%%%%%%%%%%%%%%%%%%%%%%%%%%%%%%%

% \newpage
% \input{checklist.tex}

\end{document}